\newtheorem{theorem}{Theorem}
\newtheorem{corollary}{Corollary}
\newtheorem{definition}{Definition}
\newtheorem{lemma}{Lemma}
\numberwithin{equation}{section}
\begin{document}
\title[ Comparative growth analysis of special type of differential
polynomial.....]{Comparative growth analysis of special type of differential
polynomial generated by entire and meromorphic functions on the basis of
their $(p,q)$-th order}
\author[Tanmay Biswas]{Tanmay Biswas}
\address{T. Biswas : Rajbari, Rabindrapalli, R. N. Tagore Road, P.O.-
Krishnagar, Dist-Nadia, PIN-\ 741101, West Bengal, India}
\email{tanmaybiswas\_math@rediffmail.com}
\keywords{{\small Entire function, meromorphic function, }$\left( p,q\right) 
${\small -th order, }$\left( p,q\right) ${\small -th lower order,
composition, growth, special type of differential polynomial.}\\
\textit{AMS Subject Classification}\textbf{\ }$\left( 2010\right) $\textbf{\ 
}{\footnotesize : }$30D20,30D30,30D35$}

\begin{abstract}
{\small In this paper we aim to establish some results depending on the
comparative growth properties of composite transcendental entire or
meromorphic functions and some special type of differential polynomials
generated by one of the factors on the basis of }$(p,q)${\small -th order
and }$(p,q)${\small -th lower order where }$p,q${\small \ are positive
integers with }$p\geq q${\small .}
\end{abstract}

\maketitle

\section{\textbf{Introduction, Definitions and Notations}}

\qquad Let us consider that the reader is familiar with the fundamental
results and the standard notations of the Nevanlinna theory of meromorphic
functions which are available in \cite{r3, 10, 14, 15}. We also use the
standard notations and definitions of the theory of entire functions which
are available in \cite{16} and therefore we do not explain those in details.
For $x\in \lbrack 0,\infty )$ and $k\in 
\mathbb{N}
$, we define $\exp ^{[k]}x=\exp \left( \exp ^{[k-1]}x\right) $ and $\log
^{[k]}x=\log \left( \log ^{[k-1]}x\right) $ where $%
\mathbb{N}
$ be the set of all positive integers$.$ Let $f$ be an entire function
defined in the open complex plane $%
\mathbb{C}
.$ The\ maximum modulus function $M_{f}\left( r\right) $ corresponding to $f$%
\ is defined on $\left\vert z\right\vert =r$ as $M_{f}\left( r\right) =%
\QTATOP{\max }{\left\vert z\right\vert =r}\left\vert f\left( z\right)
\right\vert $. When $f$ is meromorphic, one may introduce another function $%
T_{f}\left( r\right) $ known as Nevanlinna's characteristic function of $f,$
playing the same role as $M_{f}\left( r\right) .$ However, the Nevanlinna's
Characteristic function of a meromorphic function $f$ is defined as%
\begin{equation*}
T_{f}\left( r\right) =N_{f}\left( r\right) +m_{f}\left( r\right) ,
\end{equation*}%
wherever the function $N_{f}\left( r,a\right) \left( \overset{-}{N_{f}}%
\left( r,a\right) \right) $ known as counting function of\ $a$-points
(distinct $a$-points) of meromorphic $f$ is defined as follows:%
\begin{equation*}
N_{f}\left( r,a\right) =\overset{r}{\underset{0}{\int }}\frac{n_{f}\left(
t,a\right) -n_{f}\left( 0,a\right) }{t}dt+\overset{-}{n_{f}}\left(
0,a\right) \log r
\end{equation*}%
\begin{equation*}
\left( \overset{-}{N_{f}}\left( r,a\right) =\overset{r}{\underset{0}{\int }}%
\frac{\overset{-}{n_{f}}\left( t,a\right) -\overset{-}{n_{f}}\left(
0,a\right) }{t}dt+\overset{-}{n_{f}}\left( 0,a\right) \log r~\right) ,
\end{equation*}%
in addition we represent by $n_{f}\left( r,a\right) \left( \overset{-}{n_{f}}%
\left( r,a\right) \right) $ the number of $a$-points (distinct $a$-points)
of $f$ in $\left\vert z\right\vert \leq r$ and an $\infty $ -point is a pole
of $f$. In many occasions $N_{f}\left( r,\infty \right) $ and $\overset{-}{%
N_{f}}\left( r,\infty \right) $ are symbolized by $N_{f}\left( r\right) $
and $\overset{-}{N_{f}}\left( r\right) $ respectively.

\qquad On the other hand, the function $m_{f}\left( r,\infty \right) $
alternatively indicated by $m_{f}\left( r\right) $ known as the proximity
function of $f$ is defined as:%
\begin{align*}
m_{f}\left( r\right) & =\frac{1}{2\pi }\overset{2\pi }{\underset{0}{\int }}%
\log ^{+}\left\vert f\left( re^{i\theta }\right) \right\vert d\theta ,~\ ~%
\text{where} \\
\log ^{+}x& =\max \left( \log x,0\right) \text{ for all }x\geqslant 0~.
\end{align*}%
Also we may employ $m\left( r,\frac{1}{f-a}\right) $ by $m_{f}\left(
r,a\right) $.

\qquad If $f$ is entire, then the Nevanlinna's Characteristic function $%
T_{f}\left( r\right) $ of $f$ is defined as%
\begin{equation*}
T_{f}\left( r\right) =m_{f}\left( r\right) ~.
\end{equation*}

\qquad Further let\ $n_{0},n_{1},n_{2},.....n_{k}$ \ \ are non negative
integers. For a transcendental meromorphic function $f$, we call the
expression $M[f]=f^{n_{0}}\left( f^{\left( 1\right) }\right) ^{n_{1}}\left(
f^{\left( 2\right) }\right) ^{n_{2}}.......\left( f^{\left( k\right)
}\right) ^{n_{k}}$ to be a monomial generated by\ $f.$ The numbers $\gamma
_{M}=n_{0}+n_{1}+n_{2}+.......+n_{k}$\ and\ $\Gamma
_{M}=n_{0}+2n_{1}+3n_{2}+.......+(k+1)n_{k}$\ are called respectively the
degree and weight of\ the monomial. If $M_{1}\left[ f\right] ,$ $M_{2}\left[
f\right] ,$ $.....,$ $M_{n}[f]$ denote monomials in $f$ , then%
\begin{equation*}
Q[f]=a_{1}M_{1}\left[ f\right] +a_{2}M_{2}[f]+.....+a_{n}M_{n}[f],
\end{equation*}%
$\ $where $a_{i}\neq 0(i=1,2,...,n)$ is called a differential polynomial
generated by $f$ of degree $\gamma _{Q}=\max \{\gamma _{M_{j}}:1\leq j\leq
n\}$ and weight $\Gamma _{Q}=\max \{\Gamma _{M_{J}}:1\leq j\leq n\}.$ Also
we call the numbers$\ \underline{\gamma _{Q}}=\underset{1\leq \text{ }j\leq 
\text{ }s}{\min }\ \gamma _{Mj}$ \ and $k$ (the order of the highest
derivative of\ $f$ ) the lower degree and the order of\ $Q\left[ f\right] $\
respectively. If $\underline{\gamma _{Q}}=\gamma _{Q},$\ $Q\left[ f\right] $%
\ is called a homogeneous differential polynomial.

\qquad However, the ratio $\frac{T_{f}\left( r\right) }{T_{g}\left( r\right) 
}$ as $r\rightarrow \infty $ is called the growth of $f$ with respect to $g$
in terms of the Nevanlinna's Characteristic functions of the meromorphic
functions $f$ and $g$. Moreover, the order $\rho _{f}$ (resp. lower order $%
\lambda _{f}$) of an entire function $f$ which is generally used in
computational purpose is defined as 
\begin{equation*}
\rho _{f}=\overline{\underset{r\rightarrow \infty }{\lim }}\frac{\log \log
M_{f}\left( r\right) }{\log r}\left( \text{resp. }\lambda _{f}=\text{ }%
\underset{r\rightarrow \infty }{\underline{\lim }}\frac{\log \log
M_{f}\left( r\right) }{\log r}\right) ~.
\end{equation*}

\qquad If $f$ is a meromorphic function, then%
\begin{equation*}
\rho _{f}=\overline{\underset{r\rightarrow \infty }{\lim }}\frac{\log
T_{f}\left( r\right) }{\log r}\left( \text{resp. }\lambda _{f}=\underset{%
r\rightarrow \infty }{\underline{\lim }}\frac{\log T_{f}\left( r\right) }{%
\log r}\right) ~\text{.}
\end{equation*}

\qquad Extending this notion, Juneja et. al. \cite{r4} defined the $(p,q)$%
-th order (resp.\ $(p,q)$-th lower order) of an entire function $f$ for any
two positive integers $p,q$ with $p\geq q$ which is as follows:%
\begin{equation*}
\rho _{f^{{}}}\left( p,q\right) =\text{ }\overline{\underset{r\rightarrow
\infty }{\lim }}\frac{\log ^{[p]}M_{f}(r)}{\log ^{\left[ q\right] }r}\text{ }%
\left( \text{resp. }\lambda _{f^{{}}}\left( p,q\right) =\underset{%
r\rightarrow \infty }{\underline{\lim }}\frac{\log ^{[p]}M_{f}(r)}{\log ^{%
\left[ q\right] }r}\right) ~.
\end{equation*}

\qquad If $f$ is meromorphic function, then%
\begin{equation*}
\rho _{f}\left( p,q\right) =\text{ }\overline{\underset{r\rightarrow \infty }%
{\lim }}\frac{\log ^{[p-1]}T_{f}(r)}{\log ^{\left[ q\right] }r}\text{ and }%
\lambda _{f}\left( p,q\right) =\underset{r\rightarrow \infty }{\underline{%
\lim }}\frac{\log ^{[p-1]}T_{f}(r)}{\log ^{\left[ q\right] }r},
\end{equation*}%
where $p,q$ are any two positive integers with $p\geq q$.

\qquad These definitions extend the generalized order $\rho _{f}^{\left[ l%
\right] }$ and generalized lower order $\lambda _{f}^{\left[ l\right] }$ of
an entire function $f$ considered in \cite{r9} for each integer $l\geq 2$
since these correspond to the particular case $\rho _{f}^{\left[ l\right]
}=\rho _{f^{{}}}\left( l,1\right) $ and $\lambda _{f}^{\left[ l\right]
}=\lambda _{f^{{}}}\left( l,1\right) .$ Clearly, $\rho _{f^{{}}}\left(
2,1\right) =\rho _{f}$ and $\lambda _{f^{{}}}\left( 2,1\right) =\lambda
_{f}. $

\qquad An entire or meromorphic function for which $(p,q)$-th order and $%
(p,q)$-th lower order are the same is said to be of regular $\left(
p,q\right) $-growth. Functions which are not of regular $\left( p,q\right) $%
-growth are said to be of irregular $\left( p,q\right) $-growth.

\qquad In this connection we just recall the following two definitions which
will be needed in the sequel.

\begin{definition}
\label{d2} A function $\rho _{f}^{\left[ l\right] }\left( r\right) $ is
called a generalized proximate order of a meromorphic function $f$ relative
to $T_{f}(r)$ if\newline
$\left( i\right) $ $\rho _{f}^{\left[ l\right] }\left( r\right) $ is
non-negative and continuous for $r\geqslant r_{0},$ say,\newline
$\left( ii\right) $ $\rho _{f}^{\left[ l\right] }\left( r\right) $ is
differentiable for $r\geqslant r_{0}$ except possibly at isolated points at
which $\rho _{f}^{\left[ l\right] \prime }\left( r+0\right) $ and $\rho
_{f}^{\left[ l\right] \prime }\left( r-0\right) $ exist,\newline
$\left( iii\right) $ $\underset{r\rightarrow \infty }{\lim }\rho _{f}^{\left[
l\right] }\left( r\right) =\rho _{f}^{\left[ l\right] }<\infty ,$\newline
$\left( iv\right) $ $\underset{r\rightarrow \infty }{\lim }\rho _{f}^{\left[
l\right] \prime }\left( r\right) \overset{l-1}{\underset{i=0}{\Pi }}\log ^{%
\left[ i\right] }r=0$ and\newline
$\left( v\right) $ $\overline{\underset{r\rightarrow \infty }{\lim }}\frac{%
\log ^{\left[ l-2\right] }T_{f}(r)}{r^{\rho _{f}^{\left[ l\right] }\left(
r\right) }}=1.$
\end{definition}

\qquad The existence of such a proximate order is proved by Lahiri \cite{r6}.

\qquad Similarly one can define the generalized lower proximate order of a
meromorphic function $f$ in the following way:

\begin{definition}
\label{d3} A function $\lambda _{f}^{\left[ l\right] }\left( r\right) $ is
defined as a generalized lower proximate order of a meromorphic function $f$
relative to $T_{f}(r)$ if\newline
$\left( i\right) $ $\lambda _{f}^{\left[ l\right] }\left( r\right) $ is
non-negative and continuous for $r\geqslant r_{0},$ say,\newline
$\left( ii\right) $ $\lambda _{f}^{\left[ l\right] }\left( r\right) $ is
differentiable for $r\geqslant r_{0}$ except possibly at isolated points at
which $\lambda _{f}^{\left[ l\right] \prime }\left( r+0\right) $ and $%
\lambda _{f}^{\left[ l\right] \prime }\left( r-0\right) $ exist,\newline
$\left( iii\right) $ $\underset{r\rightarrow \infty }{lim}\lambda _{f}^{%
\left[ l\right] }\left( r\right) =\lambda _{f}^{\left[ l\right] }<\infty ,$%
\newline
$\left( iv\right) $ $\underset{r\rightarrow \infty }{\lim }\lambda _{f}^{%
\left[ l\right] \prime }\left( r\right) \overset{l-1}{\underset{i=0}{\Pi }}%
\log ^{\left[ i\right] }r=0$ and\newline
$\left( v\right) $ $\underset{r\rightarrow \infty }{\underline{\lim }}\frac{%
\log ^{\left[ l-2\right] }T_{f}(r)}{r^{\lambda _{f}^{\left[ l\right] }\left(
r\right) }}=1.$
\end{definition}

\qquad In this paper we aim to establish some results depending on the
comparative growth properties of composite transcendental entire or
meromorphic functions and some special type of differential polynomials
generated by one of the factors on the basis of $(p,q)$-th order ( $(p,q)$%
-th lower order ) and proximate order (proximate lower order) where $p,q$
are positive integers with $p\geq q$.

\section{\textbf{Lemmas}}

\qquad In this section we present some lemmas which will be needed in the
sequel.

\begin{lemma}
\label{l1}\cite{r1} If $f$ is a meromorphic function and $g$ is an entire
function then for all sufficiently large positive numbers of $r,$%
\begin{equation*}
T_{f\circ g}\left( r\right) \leqslant \left\{ 1+o(1)\right\} \frac{%
T_{g}\left( r\right) }{\log M_{g}\left( r\right) }T_{f}\left( M_{g}\left(
r\right) \right) .
\end{equation*}
\end{lemma}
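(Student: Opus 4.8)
The plan is to bound the characteristic of the composition by reducing it, via Cartan's averaging identity, to a weighted integral that pairs the value distribution of $f$ against the angular distribution of $\log|g|$, and then to read off the factor $T_{g}(r)/\log M_{g}(r)$ from a correlation (Chebyshev-type) inequality. First I would put $R=M_{g}(r)$ and note that, by the maximum modulus principle, $g$ maps $\{|z|\leq r\}$ into $\{|w|\leq R\}$, so only the $e^{i\phi}$-points of $f$ lying in $|w|<R$ can generate $e^{i\phi}$-points of $f\circ g$ in $|z|\leq r$. Cartan's identity gives $T_{f\circ g}(r)=\frac{1}{2\pi}\int_{0}^{2\pi}N_{f\circ g}(r,e^{i\phi})\,d\phi+O(1)$, and since a point $c$ with $f(c)=e^{i\phi}$ of multiplicity $\sigma_{c}$ contributes $\sigma_{c}\,N_{g}(r,c)$ to $N_{f\circ g}(r,e^{i\phi})$, I would write $N_{f\circ g}(r,e^{i\phi})=\sum_{c\,:\,f(c)=e^{i\phi},\,|c|<R}\sigma_{c}\,N_{g}(r,c)$.

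Next I would invoke Jensen's formula for the entire function $g$, namely $N_{g}(r,c)=\frac{1}{2\pi}\int_{0}^{2\pi}\log|g(re^{i\theta})-c|\,d\theta-\log|g(0)-c|$, and interchange the $\phi$-integration, the summation over the $e^{i\phi}$-points and the $\theta$-integration. Introducing the value density $A_{f}(t)$ associated with $\frac{1}{2\pi}\int_{0}^{2\pi}n_{f}(t,e^{i\phi})\,d\phi$, so that $\int_{1}^{R}A_{f}(t)\,\frac{dt}{t}=T_{f}(R)+O(1)$ by Cartan, together with the level-set measure $\Theta_{g}(r,t)=\frac{1}{2\pi}\,\mathrm{meas}\{\theta\,:\,|g(re^{i\theta})|>t\}$, so that $\int_{1}^{R}\Theta_{g}(r,t)\,\frac{dt}{t}=m_{g}(r)=T_{g}(r)+O(1)$, the main term should collapse to one of the shape $\int_{1}^{R}A_{f}(t)\,\Theta_{g}(r,t)\,\frac{dt}{t}$. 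Passing to the variable $u=\log t$ on $[0,\log M_{g}(r)]$, the two factors become an increasing function $A_{f}$ and a decreasing function $\Theta_{g}$, so Chebyshev's integral inequality for oppositely ordered functions yields $\int A_{f}\,\Theta_{g}\,du\leq \frac{1}{\log M_{g}(r)}\big(\int A_{f}\,du\big)\big(\int \Theta_{g}\,du\big)=\frac{T_{g}(r)}{\log M_{g}(r)}\,T_{f}(R)\,\{1+o(1)\}$, which is exactly the asserted bound.

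The hard part will be the passage from the actual $e^{i\phi}$-point locations $c$ to the purely radial densities $A_{f}$ and $\Theta_{g}$: the value distribution of $f$ is not rotationally symmetric, so the collapse above is not a genuine identity but must be turned into a one-sided estimate. To do this rigorously I would retain the Poisson--Jensen formula for $f-e^{i\phi}$ on $|w|<R$, substitute $w=g(re^{i\theta})$, and show that the non-radial corrections---chiefly the terms $-\log|g(0)-c|$ and the Poisson-kernel weighting, which becomes singular precisely where $|g(re^{i\theta})|$ approaches its maximum $R$---contribute only $o(1)$ relative to the main term, uniformly as $r\rightarrow\infty$. Estimating the measure of the set of $\theta$ on which $|g(re^{i\theta})|$ is close to $M_{g}(r)$, and verifying that the exceptional small-$t$ range and the $\log|g(0)-c|$ sum are absorbed into $\{1+o(1)\}$, is the real technical core; the Chebyshev pairing supplies the conceptual reason for the factor, but is legitimate only once this radial reduction has been justified.
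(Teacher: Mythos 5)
You should first note that the paper contains no proof of this lemma: it is quoted verbatim from Bergweiler \cite{r1} and used as a black box, so there is no in-paper argument to compare yours against, and I can only measure your sketch against what such a proof must deliver. Your architecture is the standard and correct one: Cartan's identity, the decomposition $N_{f\circ g}(r,e^{i\phi})=\sum_{f(c)=e^{i\phi}}\sigma_{c}N_{g}(r,c)$ over the $e^{i\phi}$-points of $f$ in $|w|\leq M_{g}(r)$, a reduction to the radial densities $A_{f}$ and $\Theta_{g}$, and a rearrangement step. Your observation that Chebyshev's correlation inequality for the oppositely ordered functions $u\mapsto A_{f}(e^{u})$ and $u\mapsto\Theta_{g}(r,e^{u})$ on $[0,\log M_{g}(r)]$ produces exactly the factor $T_{g}(r)/\log M_{g}(r)$ is valid as stated and is a clean way to see where that factor comes from.

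The genuine gap is in the ``collapse,'' and it is quantitative, not merely technical. The natural per-point estimate behind your reduction is Jensen's formula combined with $\log|w-1|\leq\log^{+}|w|+\log 2$, which gives $N_{g}(r,c)\leq\frac{1}{2\pi}\int_{0}^{2\pi}\log^{+}\frac{|g(re^{i\theta})|}{|c|}\,d\theta+O(1)$ for $|c|\geq 2|g(0)|$, with an absolute constant per point. Integrating against the averaged counting measure of the $e^{i\phi}$-points then produces, besides the main term $\int_{1}^{R}A_{f}\Theta_{g}\,\frac{dt}{t}$ with $R=M_{g}(r)$, an error of order $A_{f}(R)$. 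For $f$ of positive finite order one has $A_{f}(R)\asymp T_{f}(R)$, while the claimed main term is $\frac{T_{g}(r)}{\log M_{g}(r)}T_{f}(R)\leq T_{f}(R)$; already for $f=g=\exp$ the accumulated constants ($2\log 2\cdot A_{f}(R)\approx 0.44\,e^{r}$) exceed the main term ($\pi^{-2}e^{r}\approx 0.10\,e^{r}$), so nothing here is absorbed into $\{1+o(1)\}$. Your third paragraph correctly senses that the radial reduction is the crux, but locates the danger in the Poisson-kernel singularity near $|g|=M_{g}(r)$ and in the $\log|g(0)-c|$ terms; the sharper obstruction is this accumulation of bounded per-point losses over roughly $T_{f}(M_{g}(r))$ points (or, on the potential-theoretic route you also mention, the gap between the maximum and the mean of the subharmonic potential $\int\log|w-c|\,d\mu(c)$ on a circle), and your sketch offers no mechanism for beating it. The Chebyshev step, correct as it is, is therefore being fed an inequality you have not established; closing this requires estimating the sum $\sum_{|c|\leq R}N_{g}(r,c)$ as a whole rather than bounding each summand with an additive constant, which is precisely the content of Bergweiler's argument.
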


\begin{lemma}
\label{l7}\cite{r2} Suppose that $f$ is a meromorphic function and $g$ be an
entire function and suppose that $0<\mu <\rho _{g}\leq \infty .$Then for a
sequence of values of $r$ tending to infinity,%
\begin{equation*}
T_{f\circ g}(r)\geq T_{f}\left( \exp \left( r^{\mu }\right) \right) ~.
\end{equation*}
\end{lemma}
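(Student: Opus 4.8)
The plan is to convert the hypothesis on $\rho_g$ into pointwise information about $M_g$ along a sequence, and then feed this into a standard lower bound for the characteristic of a composite function. First I would fix $\mu'$ with $\mu<\mu'<\rho_g$ (any $\mu'>\mu$ if $\rho_g=\infty$). Since $\rho_g=\limsup_{r\to\infty}\frac{\log\log M_g(r)}{\log r}>\mu'$, the definition of the upper limit furnishes a sequence $s_n\uparrow\infty$ with $\log\log M_g(s_n)>\mu'\log s_n$, that is $M_g(s_n)>\exp\!\big(s_n^{\mu'}\big)$. Writing $r_n=2s_n$, this reads $M_g(r_n/2)>\exp\!\big((r_n/2)^{\mu'}\big)$, and $r_n\to\infty$.

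Next I would invoke the classical P\'olya/covering-type lower estimate for the Nevanlinna characteristic of a composite function: for a meromorphic $f$ and an entire $g$ there are absolute constants $c_1,c_2\in(0,1)$ such that, for all sufficiently large $r$, $T_{f\circ g}(r)\ge c_1\,T_f\!\big(c_2\,M_g(\tfrac r2)\big)$ (in the meromorphic case the poles of $f\circ g$ are accounted for through the first fundamental theorem). Substituting the sequence $\{r_n\}$ and using that $T_f$ is nondecreasing yields $T_{f\circ g}(r_n)\ge c_1\,T_f\!\big(c_2\exp((r_n/2)^{\mu'})\big)$.

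Finally I would absorb the stray constants by exploiting the genuine gap $\mu'>\mu$. Since $\mu'>\mu$, the argument $c_2\exp\!\big((r_n/2)^{\mu'}\big)$ dominates $\exp(r_n^\mu)$ by a factor that is unbounded even on the logarithmic scale, i.e. $\log\!\big(c_2\exp((r_n/2)^{\mu'})\big)\big/\,r_n^{\mu}\to\infty$. Because $t\mapsto T_f(e^t)$ is nondecreasing and convex, and because $T_f(r)/\log r\to\infty$ for transcendental $f$ (so the logarithmic slope of $T_f$ tends to infinity), a short convexity estimate gives $T_f\!\big(c_2\exp((r_n/2)^{\mu'})\big)\ge c_1^{-1}\,T_f(\exp(r_n^\mu))$ for all large $n$. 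Combining this with the previous display cancels $c_1$ and produces $T_{f\circ g}(r_n)\ge T_f(\exp(r_n^\mu))$ along $r_n\to\infty$, which is exactly the claim.

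The main obstacle is the composition lower bound used in the second step. Unlike the matching upper bound of Lemma \ref{l1}, it requires a covering argument showing that $f\circ g$ assumes values roughly as often as $f$ does over a disc whose radius is comparable to $M_g(r/2)$, and the transition from the entire to the meromorphic setting forces one to control the poles of $f\circ g$ carefully. By comparison, the extraction of the sequence $\{r_n\}$ and the absorption of $c_1,c_2$ are routine once the strict inequality $\mu'>\mu$ is in hand. I would also settle the degenerate situations at the start: the case $\rho_g=\infty$ (where any $\mu'>\mu$ is admissible) and the standing requirement that $f$ be transcendental, which is precisely what makes $T_f$ grow faster than $\log r$ and thereby powers the convexity step.
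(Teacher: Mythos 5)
The paper does not actually prove this lemma: it is imported verbatim from Bergweiler's 1990 paper \cite{r2}, so there is no internal argument to measure your proposal against. Judged on its own terms, your outline is the proof one finds in that literature. The extraction of a sequence $s_n\to\infty$ with $M_g(s_n)>\exp\bigl(s_n^{\mu'}\bigr)$ from $\rho_g>\mu'>\mu$ is correct (including the case $\rho_g=\infty$), and the constant-absorption step is sound: since $T_f(r)$ is a nondecreasing convex function of $\log r$ and $T_f(r)\to\infty$, the secant-slope inequality $T_f(R)\geq T_f(R_0)+\frac{\log R-\log R_0}{\log R'-\log R_0}\bigl(T_f(R')-T_f(R_0)\bigr)$ already yields an arbitrarily large multiplicative gain once $\log R/\log R'\to\infty$, which the strict gap $\mu'>\mu$ supplies; you do not even need the stronger fact that $T_f(r)/\log r\to\infty$. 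The insensitivity of this step to the precise constants (and to $r/2$ versus $r/4$ inside $M_g$) is what makes the reduction robust.

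The one place where the argument is not self-contained is exactly where you flag it: the lower bound $T_{f\circ g}(r)\geq c_1\,T_f\bigl(c_2\,M_g(r/2)\bigr)$. In the meromorphic setting this is not classical folklore on a par with P\'olya's inequality for entire compositions; it is a theorem of Bergweiler (from the same pair of papers \cite{r1}, \cite{r2} that this article cites), usually stated in the form $T_{f\circ g}(r)\geq\frac{1}{8}\,T_f\bigl(\frac{1}{8}M_g(\frac{r}{4})\bigr)+O(1)$ and requiring $f$ and $g$ to be transcendental. As written, the heaviest step of your proof is therefore a black box of essentially the same depth as the lemma being proved. Supply the precise citation (or the covering argument itself, with its treatment of the poles of $f\circ g$), and the proposal becomes a correct proof that is in substance the same as the one the paper is silently relying on.
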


\begin{lemma}
\label{l5}\cite{xx} Let $g$ be an entire function. Then for any $\delta (>0)$
the function $r^{\lambda _{g}^{\left[ l\right] }+\delta -\lambda _{g}^{\left[
l\right] }\left( r\right) }$ is ultimately an increasing function of $r.$
\end{lemma}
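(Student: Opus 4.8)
The plan is to reduce the asserted monotonicity to the positivity of a logarithmic derivative. Set $\phi(r)=r^{\lambda_g^{[l]}+\delta-\lambda_g^{[l]}(r)}$. Since $\phi(r)>0$ for every $r>1$, the function $\phi$ is ultimately increasing exactly when $\phi'(r)>0$ for all large $r$, which is equivalent to $\frac{d}{dr}\log\phi(r)>0$. First I would take logarithms, writing $\log\phi(r)=\bigl(\lambda_g^{[l]}+\delta-\lambda_g^{[l]}(r)\bigr)\log r$, then differentiate and clear the factor $r$ to obtain
\[
r\,\frac{\phi'(r)}{\phi(r)}=\bigl(\lambda_g^{[l]}+\delta-\lambda_g^{[l]}(r)\bigr)-r\log r\cdot\lambda_g^{[l]\prime}(r).
\]
It then suffices to show that the right-hand side is ultimately bounded below by a positive constant.

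The first summand is handled directly by condition $(iii)$ of Definition \ref{d3}: since $\lambda_g^{[l]}(r)\to\lambda_g^{[l]}$, we get $\lambda_g^{[l]}+\delta-\lambda_g^{[l]}(r)\to\delta>0$. The substance of the argument is controlling the second summand by means of condition $(iv)$. Using the convention $\log^{[0]}r=r$, I would factor the weight appearing in $(iv)$ as $\prod_{i=0}^{l-1}\log^{[i]}r=r\log r\cdot\prod_{i=2}^{l-1}\log^{[i]}r$, so that
\[
r\log r\cdot\lambda_g^{[l]\prime}(r)=\frac{\lambda_g^{[l]\prime}(r)\,\prod_{i=0}^{l-1}\log^{[i]}r}{\prod_{i=2}^{l-1}\log^{[i]}r}.
\]
The numerator tends to $0$ by condition $(iv)$, while the denominator is eventually at least $1$ (it is the empty product, equal to $1$, when $l=2$, and tends to $\infty$ when $l\geq 3$); hence the whole quotient tends to $0$.

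Combining the two estimates, the right-hand side of the first displayed identity tends to $\delta>0$, so there exists $r_{0}$ with $r\,\phi'(r)/\phi(r)>\delta/2>0$ for all $r\geq r_{0}$. Because $r>0$ and $\phi(r)>0$, this yields $\phi'(r)>0$ on $[r_{0},\infty)$, which is precisely the claim that $\phi$ is ultimately an increasing function of $r$. The one delicate point, and the step I expect to be the main obstacle, is the second summand: one must notice that the iterated-logarithm weighting in $(iv)$ is nothing but $r\log r$ multiplied by factors $\log^{[i]}r\geq 1$, so that the derivative of the lower proximate order is negligible against $\log r$ — which is exactly what condition $(iv)$ is designed to guarantee.
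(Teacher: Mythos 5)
The paper does not actually prove this lemma: it is quoted verbatim from the reference \cite{xx} and used as a black box, so there is no in-paper argument to compare yours against. Your proof is the standard one for statements of this type and it is correct: the logarithmic-derivative identity
\begin{equation*}
r\,\frac{\phi '(r)}{\phi (r)}=\bigl( \lambda _{g}^{\left[ l\right] }+\delta -\lambda _{g}^{\left[ l\right] }\left( r\right) \bigr) -r\log r\cdot \lambda _{g}^{\left[ l\right] \prime }\left( r\right)
\end{equation*}
is right, condition $(iii)$ of Definition \ref{d3} sends the first summand to $\delta >0$, and your factoring of the weight in condition $(iv)$ as $r\log r$ times the (eventually $\geq 1$) product $\prod_{i=2}^{l-1}\log ^{[i]}r$ correctly kills the second summand, including the edge case $l=2$ where that product is empty. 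Two small points worth tightening: Definition \ref{d3} only guarantees differentiability of $\lambda _{g}^{\left[ l\right] }\left( r\right) $ off an isolated exceptional set where one-sided derivatives exist, so you should conclude that $\phi '>0$ off that set (with both one-sided derivatives positive at the exceptional points) and then invoke continuity of $\phi $ to get monotonicity, rather than asserting $\phi '(r)>0$ everywhere; and ``increasing'' is not literally ``exactly when $\phi '>0$,'' though for the direction you need this costs nothing. Neither affects the validity of the argument.
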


\begin{lemma}
\label{l6}\cite{xx} Let $g$ be an entire function. Then for any $\delta (>0)$
the function $r^{\rho _{g}^{\left[ l\right] }+\delta -\rho _{g}^{\left[ l%
\right] }\left( r\right) }$ is ultimately an increasing function of $r.$
\end{lemma}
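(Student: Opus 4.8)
The plan is to show that the logarithmic derivative of the candidate function is eventually positive, which forces the function itself to increase. Write $\phi(r)=r^{\rho_g^{[l]}+\delta-\rho_g^{[l]}(r)}$ and pass to logarithms, so that $\log\phi(r)=\bigl(\rho_g^{[l]}+\delta-\rho_g^{[l]}(r)\bigr)\log r$. Since $\rho_g^{[l]}(r)$ is differentiable for all large $r$ except at isolated points by Definition \ref{d2}(ii), $\phi$ is differentiable off those same points, and there
$$\frac{r\,\phi'(r)}{\phi(r)}=-\,r\,\rho_g^{[l]\prime}(r)\log r+\bigl(\rho_g^{[l]}+\delta-\rho_g^{[l]}(r)\bigr).$$
Because $r,\phi(r)>0$, the sign of $\phi'(r)$ is the sign of the right-hand side, so it suffices to prove that this expression is positive for all sufficiently large $r$.

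Next I would control the two groups of terms separately using the defining properties of the proximate order. By Definition \ref{d2}(iii) one has $\rho_g^{[l]}-\rho_g^{[l]}(r)\to 0$, so the bracketed part tends to $\delta$. The main point is to show that $r\,\rho_g^{[l]\prime}(r)\log r\to 0$. For this I would invoke Definition \ref{d2}(iv), which asserts that $\rho_g^{[l]\prime}(r)\prod_{i=0}^{l-1}\log^{[i]}r\to 0$; recalling $\log^{[0]}r=r$, this product equals $r\log r\cdot\log^{[2]}r\cdots\log^{[l-1]}r$. Hence
$$r\,\rho_g^{[l]\prime}(r)\log r=\frac{\rho_g^{[l]\prime}(r)\,\prod_{i=0}^{l-1}\log^{[i]}r}{\log^{[2]}r\cdots\log^{[l-1]}r},$$
where the numerator tends to $0$ and, for $l\ge 3$, the denominator tends to $\infty$, while for $l=2$ the denominator is the empty product $1$ and the numerator is itself the expression in Definition \ref{d2}(iv). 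In every case $r\,\rho_g^{[l]\prime}(r)\log r\to 0$.

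Combining the two estimates, the right-hand side of the displayed identity tends to $\delta>0$, so it is positive for all $r$ beyond some $r_1$. Thus $\phi'(r)>0$ wherever the derivative exists past $r_1$; since $\phi$ is continuous everywhere and has positive derivative off an isolated set, it is increasing on $[r_1,\infty)$, which is precisely the assertion. The only genuinely delicate point is the limit $r\,\rho_g^{[l]\prime}(r)\log r\to 0$, where the bookkeeping of the iterated-logarithm factors supplied by Definition \ref{d2}(iv) must be carried out carefully; the remaining steps are routine. I note that this argument is identical in form to the proof of Lemma \ref{l5}, with $\lambda_g^{[l]}$ and Definition \ref{d3} in place of $\rho_g^{[l]}$ and Definition \ref{d2}.
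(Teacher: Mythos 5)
The paper gives no proof of Lemma \ref{l6} at all: it is imported verbatim from \cite{xx}, so there is nothing in the text to compare your argument against. Your proof is correct and is the standard one for proximate orders — the logarithmic-derivative identity is computed correctly, condition (iii) of Definition \ref{d2} makes the bracketed term tend to $\delta$, and your handling of condition (iv) (dividing the product $\prod_{i=0}^{l-1}\log^{[i]}r$ by $\log^{[2]}r\cdots\log^{[l-1]}r$, an empty product when $l=2$) correctly gives $r\,\rho_{g}^{\left[ l\right] \prime }(r)\log r\rightarrow 0$, so the logarithmic derivative is ultimately positive and continuity across the isolated exceptional points finishes the argument.
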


\begin{lemma}
\label{l8}\cite{y} Let $f$ be a transcendental meromorphic function and $%
F=f^{n}Q\left[ f\right] $ where $Q\left[ f\right] $ is a differential
polynomial in $f$, then for any $n\geq 1$%
\begin{eqnarray*}
T_{f}\left( r\right) &=&O\left\{ T_{F}\left( r\right) \right\} \text{ as }%
r\rightarrow \infty \\
\text{and }T_{F}\left( r\right) &=&O\left\{ T_{f}\left( r\right) \right\} 
\text{ as }r\rightarrow \infty ~.
\end{eqnarray*}
\end{lemma}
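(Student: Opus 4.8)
The statement asserts that $T_{f}(r)$ and $T_{F}(r)$ have the same order of growth, each being a bounded multiple of the other for all large $r$. I would prove the two $O$-relations separately. The bound $T_{F}(r)=O\{T_{f}(r)\}$ is the routine half, following from the standard growth estimates for differential polynomials; the reverse bound $T_{f}(r)=O\{T_{F}(r)\}$ is the genuine difficulty, and it is precisely here that the hypothesis $n\geq 1$---equivalently, that the factor $f^{n}$ divides every term of $F$---must be exploited.

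For the upper bound I would first expand $F=f^{n}Q[f]=\sum_{j}a_{j}f^{n}M_{j}[f]$, so that $F$ is itself a differential polynomial in $f$ each of whose monomials has $f$-degree at least $n$. For a single monomial $M[f]=\prod_{j}(f^{(j)})^{n_{j}}$ of degree $\gamma _{M}$ and weight $\Gamma _{M}$, the lemma on the logarithmic derivative gives $m_{f^{(j)}}(r)\leq m_{f}(r)+S(r,f)$, whence $m_{M[f]}(r)\leq \gamma _{M}m_{f}(r)+S(r,f)$, while a pole of $f$ of order $\tau $ produces a pole of $M[f]$ of order $\gamma _{M}\tau +(\Gamma _{M}-\gamma _{M})$, so that $N_{M[f]}(r)\leq \Gamma _{M}N_{f}(r)$. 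Adding these and using $T_{f^{n}}(r)=nT_{f}(r)$ together with subadditivity of the characteristic over the finitely many monomials of $F$ yields
\begin{equation*}
T_{F}(r)\leq K\,T_{f}(r)+S(r,f)
\end{equation*}
for a constant $K$ depending only on $n$ and the weights occurring in $Q$. Since $f$ is transcendental, $S(r,f)=o\{T_{f}(r)\}$, so the error term is absorbed (using the monotonicity of the characteristic to dispose of the exceptional set of $r$), and $T_{F}(r)=O\{T_{f}(r)\}$ follows.

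The reverse bound is the main obstacle. Writing $T_{f}(r)=m_{f}(r)+N_{f}(r)$, the counting part is comparatively manageable: at a pole (resp.\ zero) of $f$ of order $\tau $ the divisibility of $F$ by $f^{n}$ forces a pole (resp.\ zero) of $F$ of order at least $n\tau $, so that $N_{f}(r)=O\{N_{F}(r)\}=O\{T_{F}(r)\}$ once the possible cancellation of the leading Laurent coefficient coming from the top-degree monomial of $Q[f]$ is excluded. The real difficulty is the proximity part, namely to show $m_{f}(r)=O\{T_{F}(r)\}$. The tempting route is to solve $f^{n}=F/Q[f]$ and apply subadditivity, giving $nT_{f}(r)\leq T_{F}(r)+T_{Q[f]}(r)+O(1)$; but the estimate $T_{Q[f]}(r)\leq \Lambda \,T_{f}(r)+S(r,f)$ from the previous step carries a constant $\Lambda $ that may exceed $n$, so this degenerates to $(n-\Lambda )T_{f}(r)\leq T_{F}(r)+S(r,f)$ and is vacuous when $\Lambda \geq n$. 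Overcoming this is the heart of the matter: one must show that $Q[f]$ cannot absorb the whole growth of $f^{n}$, i.e.\ that $F$ is never of strictly smaller order than $f$. This I would handle by bounding $m_{F}(r)$ from below in terms of $m_{f}(r)$ directly on the arcs where $|f|$ is large, using the logarithmic-derivative calculus and absorbing the contribution of $Q[f]$ into $S(r,f)$; excluding cancellation among the monomials of $Q[f]$ in this lower estimate is the delicate point (carried out in \cite{y}). Once the finer inequality $m_{f}(r)\leq B\,T_{F}(r)+S(r,f)$ is in hand, combining it with the counting estimate and absorbing $S(r,f)$ as before gives $T_{f}(r)=O\{T_{F}(r)\}$, completing the proof.
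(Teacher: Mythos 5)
First, a point of reference: the paper offers no proof of this lemma at all --- it is imported verbatim from \cite{y} and used as a black box --- so there is no in-paper argument to measure yours against; your proposal has to stand on its own. Judged that way, it establishes only the easy half. The estimate $T_{F}\left( r\right) =O\left\{ T_{f}\left( r\right) \right\} $ via $m_{f^{(j)}}(r)\leq m_{f}(r)+S(r,f)$ and pole-multiplicity counting is fine in outline (though the final absorption of $S(r,f)$ is less innocent than you suggest: $S(r,f)=o\left\{ T_{f}\left( r\right) \right\} $ only outside a set of finite measure, and monotonicity of $T$ removes that exceptional set only if one also knows $T_{f}(r+O(1))=O\left\{ T_{f}\left( r\right) \right\} $, which fails for fast-growing $f$; this is why such lemmas are usually stated with an exceptional set).

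The genuine gap is in the direction $T_{f}\left( r\right) =O\left\{ T_{F}\left( r\right) \right\} $. You correctly diagnose why the naive manipulation $f^{n}=F/Q[f]$ degenerates, but what you put in its place is not a proof: the decisive step, excluding cancellation among the monomials of $Q[f]$, is explicitly deferred back to \cite{y}. Worse, the strategy you sketch --- a lower bound for $|F|$ on the arcs where $|f|$ is large, with the contribution of $Q[f]$ ``absorbed into $S(r,f)$'' --- cannot work as described. The lemma on the logarithmic derivative gives upper bounds on proximity functions of quotients $f^{(k)}/f$; it supplies no lower bound, pointwise or in the mean, for $|Q[f]|$ on the set where $|f|$ is large, and $Q[f]$ can be exponentially small precisely there. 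Take $n=1$, $f=e^{z}+e^{-2z}$ and $Q[f]=f-f^{\prime \prime }=-3e^{-2z}$: where $|f|\asymp e^{r\cos \theta }$ is large one has $|F|=|f|\,|Q[f]|\asymp e^{-r\cos \theta }\rightarrow 0$, so $\log ^{+}|F|$ vanishes exactly where $\log ^{+}|f|$ is maximal. The conclusion of the lemma still holds for this pair, but only because $F$ grows on the complementary arcs, which your argument never examines. The counting-function step has the same unresolved character: at a pole of $f$ the principal parts of the monomials of $Q[f]$ can cancel, so a pole of $f$ need not produce a pole of $F$, and ``excluding'' that cancellation is the substance of the proof rather than a remark. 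In short, the reverse inclusion $T_{f}\left( r\right) =O\left\{ T_{F}\left( r\right) \right\} $ remains unproved in your write-up.
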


\begin{lemma}
\label{l9} Let $f$ be a transcendental meromorphic function and $F=f^{n}Q%
\left[ f\right] $ where $Q\left[ f\right] $ is a differential polynomial in $%
f$, then for any $n\geq 1$%
\begin{equation*}
\rho _{F}\left( p,q\right) =\text{ }\rho _{f}\left( p,q\right) \text{ and }%
\lambda _{F}\left( p,q\right) =\lambda _{f}\left( p,q\right) ~.
\end{equation*}
\end{lemma}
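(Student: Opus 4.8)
The plan is to derive Lemma \ref{l9} directly from the two-sided growth comparison furnished by Lemma \ref{l8}, the essential point being that the $(p,q)$-th order and the $(p,q)$-th lower order are insensitive to multiplicative constants in the Nevanlinna characteristic. First I would invoke Lemma \ref{l8} to record that there exist constants $A,B>0$ and $r_{0}>0$ such that
\begin{equation*}
\frac{1}{B}\,T_{F}\left( r\right) \leq T_{f}\left( r\right) \leq A\,T_{F}\left( r\right) \quad \text{for all }r\geq r_{0}.
\end{equation*}
Since $f$ is transcendental, $T_{f}\left( r\right) \rightarrow \infty $, and the left-hand inequality then forces $T_{F}\left( r\right) \rightarrow \infty $ as well, so that $\log ^{\left[ p-1\right] }T_{f}\left( r\right) $ and $\log ^{\left[ p-1\right] }T_{F}\left( r\right) $ are eventually defined and positive and the quotients defining the orders make sense.

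Next I would pass to iterated logarithms. Taking $\log $ throughout the displayed chain absorbs the multiplicative constants into a bounded additive term, giving $\log T_{f}\left( r\right) =\log T_{F}\left( r\right) +O(1)$ as $r\rightarrow \infty $. Using the elementary fact that $\log ^{\left[ k\right] }\left( x+O(1)\right) =\log ^{\left[ k\right] }x+o(1)$ whenever $x\rightarrow \infty $, I would apply $\log ^{\left[ p-2\right] }$ to both sides (for $p\geq 2$) to obtain
\begin{equation*}
\log ^{\left[ p-1\right] }T_{f}\left( r\right) =\log ^{\left[ p-1\right] }T_{F}\left( r\right) +o(1)\quad \text{as }r\rightarrow \infty .
\end{equation*}
Dividing by $\log ^{\left[ q\right] }r$, which tends to infinity, I would get
\begin{equation*}
\frac{\log ^{\left[ p-1\right] }T_{f}\left( r\right) }{\log ^{\left[ q\right] }r}=\frac{\log ^{\left[ p-1\right] }T_{F}\left( r\right) }{\log ^{\left[ q\right] }r}+o(1),
\end{equation*}
so that taking upper limits yields $\rho _{f}\left( p,q\right) =\rho _{F}\left( p,q\right) $ and taking lower limits yields $\lambda _{f}\left( p,q\right) =\lambda _{F}\left( p,q\right) $. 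The borderline case $p=1$ (which forces $q=1$) I would dispatch separately by noting that transcendence of $f$ and of $F$ makes $T_{f}\left( r\right) /\log r$ and $T_{F}\left( r\right) /\log r$ both tend to infinity, so that all four quantities equal $\infty $ and the identities hold trivially.

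The only real content beyond quoting Lemma \ref{l8} is the constant-absorption step, and this is where I expect the one point of care: I must justify cleanly that the multiplicative factors $A$ and $B$ vanish after a single logarithm, and that the residual bounded term is then annihilated both by the remaining iterated logarithms $\log ^{\left[ p-2\right] }$ and by the division by $\log ^{\left[ q\right] }r\rightarrow \infty $. Once this routine but slightly delicate bookkeeping is in hand, the equality of the orders and of the lower orders is immediate and simultaneous, since the same chain of inequalities controls both the upper and the lower limits.
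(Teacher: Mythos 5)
Your proposal is correct and follows essentially the same route as the paper: both arguments rest entirely on the two-sided comparison $T_F(r)\asymp T_f(r)$ from Lemma \ref{l8}, absorb the multiplicative constants into an additive $O(1)$ after taking logarithms, and then divide by $\log^{[q]}r$ and pass to upper and lower limits. Your version is somewhat more careful with the bookkeeping (the $o(1)$ propagation through iterated logarithms and the degenerate case $p=1$), but the underlying idea is identical.
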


\begin{proof}
Let us consider that $\alpha $ and $\beta $ be any two constant greater than 
$1$. Now we get from Lemma \ref{l8} for all sufficiently large values of $r$
that%
\begin{equation}
T_{F}(r)<\alpha \cdot T_{f}\left( r\right)  \label{1.}
\end{equation}%
and%
\begin{equation}
T_{f}\left( r\right) <\beta \cdot T_{F}(r)~.  \label{2.}
\end{equation}

\qquad Now from $\left( \ref{1.}\right) $ it follows for all sufficiently
large values of $r$ that%
\begin{eqnarray}
\log ^{\left[ p\right] }T_{F}(r) &<&\log ^{\left[ p\right] }T_{f}\left(
r\right) +O(1)  \notag \\
i.e.,~\frac{\log ^{\left[ p\right] }T_{F}(r)}{\log ^{\left[ q\right] }r} &<&%
\frac{\log ^{\left[ p\right] }T_{f}\left( r\right) +O(1)}{\log ^{\left[ q%
\right] }r}  \notag \\
i.e.,~\rho _{F}\left( p,q\right) &\leq &\text{ }\rho _{f}\left( p,q\right) ~.
\label{3.}
\end{eqnarray}

\qquad Again from $\left( \ref{2.}\right) $ we obtain for all sufficiently
large values of $r$ that%
\begin{eqnarray}
\log ^{\left[ p\right] }T_{f}\left( r\right) &<&\log ^{\left[ p\right]
}T_{F}(r)+O(1)  \notag \\
i.e.,~\frac{\log ^{\left[ p\right] }T_{f}\left( r\right) }{\log ^{\left[ q%
\right] }r} &<&\frac{\log ^{\left[ p\right] }T_{F}(r)+O(1)}{\log ^{\left[ q%
\right] }r}  \notag \\
i.e.,~\rho _{f}\left( p,q\right) &\leq &\text{ }\rho _{F}\left( p,q\right) ~.
\label{4.}
\end{eqnarray}

\qquad Therefore from $\left( \ref{3.}\right) $ and $\left( \ref{4.}\right)
, $ we get that%
\begin{equation*}
\rho _{F}\left( p,q\right) =\text{ }\rho _{f}\left( p,q\right) ~.
\end{equation*}

\qquad In a similar manner, $\lambda _{F}\left( p,q\right) =\lambda
_{f}\left( p,q\right) .$

\qquad Thus the lemma follows.
\end{proof}

\section{\textbf{Main Results}}

\qquad In this section we present the main results of the paper.

\begin{theorem}
\label{t1} Let $f$\ be a transcendental meromorphic function and $g$ be an
entire function such that $\rho _{g}(m,n)<\lambda _{f}(p,q)$ $\leq \rho
_{f}(p,q)<\infty $ where $p,q,m,n$ are positive integers with $p\geq q,m\geq
n$. Also let $F=f^{\alpha }Q\left[ f\right] $ where $Q\left[ f\right] $ is a
differential polynomial in $f$, then for any $\alpha \geq 1$%
\begin{equation*}
\left( i\right) \underset{r\rightarrow \infty }{\lim }\frac{\log ^{\left[ p-1%
\right] }T_{f\circ g}\left( \exp ^{\left[ n-1\right] }r\right) }{\log ^{%
\left[ p-2\right] }T_{F}(\exp ^{\left[ q-1\right] }r)}=0\text{ if }q\geq m~\
\ \ \ \ \ \ 
\end{equation*}%
and%
\begin{equation*}
\left( ii\right) \underset{r\rightarrow \infty }{\lim }\frac{\log ^{\left[
p+m-q-2\right] }T_{f\circ g}\left( \exp ^{\left[ n-1\right] }r\right) }{\log
^{\left[ p-2\right] }T_{F}(\exp ^{\left[ q-1\right] }r)}=0\text{ if }q<m.
\end{equation*}
\end{theorem}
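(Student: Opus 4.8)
The plan is to reduce everything to the functions $f$ and $g$ and then to run an iterated–logarithm estimate on numerator and denominator separately. First I would invoke Lemma \ref{l9} to replace the orders of $F$ by those of $f$, so that the hypothesis reads $\rho_g(m,n) < \lambda_f(p,q) = \lambda_F(p,q) \leq \rho_F(p,q) = \rho_f(p,q) < \infty$. Next, combining Lemma \ref{l1} with the standard inequality $T_g(r) \leq \log M_g(r)$ valid for entire $g$, I get $\frac{T_g(r)}{\log M_g(r)} \leq 1$ and hence $T_{f\circ g}(r) \leq \{1+o(1)\}\, T_f(M_g(r))$ for all large $r$; this is the single inequality that lets me push the composition estimate through $f$ and $M_g$.

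For the denominator I would work from below. Since $\lambda_F(p,q) = \lambda_f(p,q)$, for every $\varepsilon > 0$ and all large $r$ one has $\log^{[p-1]} T_F(r) \geq (\lambda_f(p,q)-\varepsilon)\log^{[q]} r$. Substituting $r \mapsto \exp^{[q-1]} r$ and using $\log^{[q]}(\exp^{[q-1]} r) = \log r$ yields $\log^{[p-1]} T_F(\exp^{[q-1]} r) \geq (\lambda_f(p,q)-\varepsilon)\log r$, and one more exponentiation gives the clean lower bound $\log^{[p-2]} T_F(\exp^{[q-1]} r) \geq r^{\lambda_f(p,q)-\varepsilon}$. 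Because $\rho_g(m,n) \geq 0$ forces $\lambda_f(p,q) > 0$, I may fix $\varepsilon$ so small that this lower bound tends to infinity.

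For the numerator I would feed the definitions of $\rho_g(m,n)$ and $\rho_f(p,q)$ into the composition inequality. Writing $M = M_g(\exp^{[n-1]} r)$, the bound $\log^{[m]} M_g(R) \leq (\rho_g(m,n)+\varepsilon)\log^{[n]} R$ at $R = \exp^{[n-1]} r$ gives $\log^{[m]} M \leq (\rho_g(m,n)+\varepsilon)\log r$, while $\log^{[p-1]} T_f(M) \leq (\rho_f(p,q)+\varepsilon)\log^{[q]} M$. Combined with the composition inequality this yields $\log^{[p-1]} T_{f\circ g}(\exp^{[n-1]} r) \leq (\rho_f(p,q)+\varepsilon)\log^{[q]} M + O(1)$, so everything reduces to comparing $\log^{[q]} M$ with the tower $\log^{[m]} M$. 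When $q \geq m$ I peel off $q-m$ extra logarithms to get $\log^{[q]} M \leq \log^{[q-m]}\big((\rho_g(m,n)+\varepsilon)\log r\big) \leq C\log r$; the numerator is then $O(\log r)$ and the ratio against $r^{\lambda_f(p,q)-\varepsilon}$ tends to $0$, which settles $(i)$. When $q < m$ I instead exponentiate $m-q$ times to get $\log^{[q]} M \leq \exp^{[m-q]}\big((\rho_g(m,n)+\varepsilon)\log r\big)$; applying the remaining $m-q-1$ logarithms that convert $\log^{[p-1]}$ into $\log^{[p+m-q-2]}$, the tower collapses to give numerator $\leq r^{\rho_g(m,n)+2\varepsilon}$, and dividing by $r^{\lambda_f(p,q)-\varepsilon}$ sends the ratio to $0$ precisely because $\rho_g(m,n) < \lambda_f(p,q)$.

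The estimates above are routine; the step that needs the most care — and the one I expect to be the main obstacle — is the bookkeeping of the nested $\exp^{[\cdot]}$ and $\log^{[\cdot]}$ operators, verifying that the index $p+m-q-2$ is exactly the one for which multiplicative constants and $O(1)$ terms are absorbed by an arbitrarily small increase in $\varepsilon$ and the surviving exponent of $r$ is $\rho_g(m,n) - \lambda_f(p,q) < 0$. One should also keep in mind the degenerate behavior of the iterated logarithm for small $p$ and $q$ (interpreting $\log^{[0]}$ as the identity and $\log^{[-1]}$ as exponentiation), though this does not affect the argument.
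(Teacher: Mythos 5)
Your proposal is correct and follows essentially the same route as the paper: Lemma \ref{l1} together with $T_{g}(r)\leq \log M_{g}(r)$ for the upper bound on the numerator, the definition of $\lambda _{F}(p,q)=\lambda _{f}(p,q)$ (via Lemma \ref{l9}) for the lower bound $\log ^{\left[ p-2\right] }T_{F}(\exp ^{\left[ q-1\right] }r)\geq r^{\lambda _{f}(p,q)-\varepsilon }$, and the same case split on $q\gtrless m$ with the same $\exp ^{[\cdot ]}/\log ^{[\cdot ]}$ bookkeeping. The only (immaterial) deviation is that in Case $q\geq m$ you bound the numerator by $O(\log r)$ rather than the paper's $r^{\rho _{g}(m,n)+\varepsilon }$; both suffice against the denominator.
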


\begin{proof}
Since $\rho _{g}(m,n)<\lambda _{f}(p,q)$ we can choose $\varepsilon \left(
>0\right) $ is such a way that%
\begin{equation}
\rho _{g}(m,n)+\varepsilon <\lambda _{f}(p,q)-\varepsilon .  \label{1x}
\end{equation}%
As $T_{g}(r)\leq \log ^{+}M_{g}(r)$ \{cf. \cite{r3} \}$,$ we have from Lemma %
\ref{l1}, for all sufficiently large values of $r$ that%
\begin{equation*}
\log ^{\left[ p-1\right] }T_{f\circ g}\left( \exp ^{\left[ n-1\right]
}r\right) \leq \log ^{\left[ p-1\right] }T_{f}\left( M_{g}\left( \exp ^{%
\left[ n-1\right] }r\right) \right) +O(1)
\end{equation*}%
\begin{equation}
i.e.,~\log ^{\left[ p-1\right] }T_{f\circ g}\left( \exp ^{\left[ n-1\right]
}r\right) \leqslant \left( \rho _{f}\left( p,q\right) +\varepsilon \right)
\log ^{\left[ q\right] }M_{g}\left( \exp ^{\left[ n-1\right] }r\right) +O(1).
\label{2x}
\end{equation}%
Now the following two cases may arise .\newline
\textbf{Case I.} Let $q\geqslant m$. Then we have from $\left( \ref{2x}%
\right) $ for all sufficiently large values of $r$ that%
\begin{equation}
\log ^{\left[ p-1\right] }T_{f\circ g}\left( \exp ^{\left[ n-1\right]
}r\right) \leqslant \left( \rho _{f}\left( p,q\right) +\varepsilon \right)
\log ^{\left[ m-1\right] }M_{g}\left( \exp ^{\left[ n-1\right] }r\right)
+O(1).  \label{3x}
\end{equation}%
Again for all sufficiently large values of $r,$%
\begin{eqnarray}
\log ^{\left[ m\right] }M_{g}\left( \exp ^{\left[ n-1\right] }r\right)
&\leqslant &\left( \rho _{g}(m,n)+\varepsilon \right) \log ^{\left[ n\right]
}\exp ^{\left[ n-1\right] }r  \notag \\
i.e.,~\log ^{\left[ m-1\right] }M_{g}\left( \exp ^{\left[ n-1\right]
}r\right) &\leqslant &r^{\left( \rho _{g}(m,n)+\varepsilon \right) }.
\label{4x}
\end{eqnarray}%
Now from $\left( \ref{3x}\right) $ and $\left( \ref{4x}\right) $ we have for
all sufficiently large values of $r$ that%
\begin{equation}
\log ^{\left[ p-1\right] }T_{f\circ g}\left( \exp ^{\left[ n-1\right]
}r\right) \leqslant \left( \rho _{f}\left( p,q\right) +\varepsilon \right)
r^{\left( \rho _{g}(m,n)+\varepsilon \right) }+O(1).  \label{5x}
\end{equation}%
\textbf{Case II.} Let $q<m.$ Then for all sufficiently large values of $r$
we get from $\left( \ref{2x}\right) $ that%
\begin{equation}
\log ^{\left[ p-1\right] }T_{f\circ g}\left( \exp ^{\left[ n-1\right]
}r\right) \leqslant \left( \rho _{f}\left( p,q\right) +\varepsilon \right)
\exp ^{\left[ m-q\right] }\log ^{\left[ m\right] }M_{g}\left( \exp ^{\left[
n-1\right] }r\right) +O(1).  \label{6x}
\end{equation}%
Again for all sufficiently large values of $r,$%
\begin{eqnarray}
\log ^{\left[ m\right] }M_{g}\left( \exp ^{\left[ n-1\right] }r\right)
&\leqslant &\left( \rho _{g}(m,n)+\varepsilon \right) \log ^{\left[ n\right]
}\exp ^{\left[ n-1\right] }r  \notag \\
i.e.,~\log ^{\left[ m\right] }M_{g}\left( \exp ^{\left[ n-1\right] }r\right)
&\leqslant &\log r^{\rho _{g}(m,n)+\varepsilon }  \notag \\
i.e.,~\exp ^{\left[ m-q\right] }\log ^{\left[ m\right] }M_{g}\left( \exp ^{%
\left[ n-1\right] }r\right) &\leqslant &\exp ^{\left[ m-q\right] }\log
r^{\rho _{g}(m,n)+\varepsilon }  \notag
\end{eqnarray}%
\begin{equation}
i.e.,~\exp ^{\left[ m-q\right] }\log ^{\left[ m\right] }M_{g}\left( \exp ^{%
\left[ n-1\right] }r\right) \leqslant \exp ^{\left[ m-q-1\right] }r^{\rho
_{g}(m,n)+\varepsilon }.  \label{7x}
\end{equation}%
Now from $\left( \ref{6x}\right) $ and $\left( \ref{7x}\right) $ we have for
all sufficiently large values of $r$ that%
\begin{equation*}
\log ^{\left[ p-1\right] }T_{f\circ g}\left( \exp ^{\left[ n-1\right]
}r\right) \leq \left( \rho _{f}\left( p,q\right) +\varepsilon \right) \exp ^{%
\left[ m-q-1\right] }r^{\rho _{g}(m,n)+\varepsilon }+O(1)
\end{equation*}%
\begin{eqnarray}
i.e.,~\log ^{\left[ p\right] }T_{f\circ g}\left( \exp ^{\left[ n-1\right]
}r\right) &\leqslant &\exp ^{\left[ m-q-2\right] }r^{\rho
_{g}(m,n)+\varepsilon }+O(1)  \notag \\
i.e.,~\log ^{\left[ p+m-q-2\right] }T_{f\circ g}\left( \exp ^{\left[ n-1%
\right] }r\right) &\leqslant &\log ^{[m-q-2]}\exp ^{\left[ m-q-2\right]
}r^{\rho _{g}(m,n)+\varepsilon }+O(1)  \notag
\end{eqnarray}%
\begin{equation}
i.e.,~\log ^{\left[ p+m-q-2\right] }T_{f\circ g}\left( \exp ^{\left[ n-1%
\right] }r\right) \leqslant r^{\rho _{g}(m,n)+\varepsilon }+O(1)~.
\label{8x}
\end{equation}%
Again for all sufficiently large values of $r,$ we get in view of Lemma \ref%
{l9} that%
\begin{eqnarray}
\log ^{\left[ p-1\right] }T_{F}(\exp ^{\left[ q-1\right] }r) &\geqslant
&(\lambda _{F}(p,q)-\varepsilon )\log ^{\left[ q\right] }\exp ^{\left[ q-1%
\right] }r  \notag \\
i.e.,~\log ^{\left[ p-1\right] }T_{F}(\exp ^{\left[ q-1\right] }r)
&\geqslant &(\lambda _{f}(p,q)-\varepsilon )\log r  \notag \\
i.e.,~\log ^{\left[ p-1\right] }T_{F}(\exp ^{\left[ q-1\right] }r)
&\geqslant &\log r^{(\lambda _{f}(p,q)-\varepsilon )}  \notag \\
i.e.,~\log ^{\left[ p-2\right] }T_{F}(\exp ^{\left[ q-1\right] }r)
&\geqslant &r^{(\lambda _{f}(p,q)-\varepsilon )}~.  \label{9x}
\end{eqnarray}%
Now combining $\left( \ref{5x}\right) $ of Case I and $\left( \ref{9x}%
\right) $ we get for all sufficiently large values of $r$ that%
\begin{equation}
\frac{\log ^{\left[ p-1\right] }T_{f\circ g}\left( \exp ^{\left[ n-1\right]
}r\right) }{\log ^{\left[ p-2\right] }T_{F}(\exp ^{\left[ q-1\right] }r)}%
\leq \frac{\left( \rho _{f}\left( p,q\right) +\varepsilon \right) r^{\left(
\rho _{g}(m,n)+\varepsilon \right) }+O(1)}{r^{(\lambda _{f}(p,q)-\varepsilon
)}}.  \label{10x}
\end{equation}%
Now in view of $\left( \ref{1x}\right) $ it follows from $\left( \ref{10x}%
\right) $ that%
\begin{equation*}
\underset{r\rightarrow \infty }{\lim }\frac{\log ^{\left[ p-1\right]
}T_{f\circ g}\left( \exp ^{\left[ n-1\right] }r\right) }{\log ^{\left[ p-2%
\right] }T_{F}(\exp ^{\left[ q-1\right] }r)}=0~.
\end{equation*}%
This proves the first part of the theorem.

Again combining $\left( \ref{8x}\right) $ of Case II and $\left( \ref{9x}%
\right) $ we obtain for all sufficiently large values of $r$ that%
\begin{equation}
\frac{\log ^{\left[ p+m-q-2\right] }T_{f\circ g}\left( \exp ^{\left[ n-1%
\right] }r\right) }{\log ^{\left[ p-2\right] }T_{F}(\exp ^{\left[ q-1\right]
}r)}\leq \frac{r^{\rho _{g}(m,n)+\varepsilon }+O(1)}{r^{(\lambda
_{f}(p,q)-\varepsilon )}}.  \label{11x}
\end{equation}%
Now in view of $\left( \ref{1x}\right) $ it follows from $\left( \ref{11x}%
\right) $ that%
\begin{equation*}
\underset{r\rightarrow \infty }{\lim }\frac{\log ^{\left[ p+m-q-2\right]
}T_{f\circ g}\left( \exp ^{\left[ n-1\right] }r\right) }{\log ^{\left[ p-2%
\right] }T_{F}(\exp ^{\left[ q-1\right] }r)}=0~.
\end{equation*}%
Thus the theorem follows.
\end{proof}

\begin{theorem}
\label{t2} Let $f$\ be a transcendental meromorphic function and $g$ be an
entire function such that $\lambda _{g}(m,n)<\lambda _{f}(p,q)\leq \rho
_{f}(p,q)<\infty $ where $p,q,m,n$ are positive integers with $p\geq q$ and $%
m\geq n$. Also let $F=f^{\alpha }Q\left[ f\right] $ where $Q\left[ f\right] $
is a differential polynomial in $f$, then for any $\alpha \geq 1$%
\begin{equation*}
\left( i\right) \underset{r\rightarrow \infty }{\underline{\lim }}\frac{\log
^{\left[ p-1\right] }T_{f\circ g}\left( \exp ^{\left[ n-1\right] }r\right) }{%
\log ^{\left[ p-2\right] }T_{F}(\exp ^{\left[ q-1\right] }r)}=0\text{ if }%
q\geq m~\ \ \ \ \ \ \ 
\end{equation*}%
and%
\begin{equation*}
\left( ii\right) \underset{r\rightarrow \infty }{\underline{\lim }}\frac{%
\log ^{\left[ p+m-q-2\right] }T_{f\circ g}\left( \exp ^{\left[ n-1\right]
}r\right) }{\log ^{\left[ p-2\right] }T_{F}(\exp ^{\left[ q-1\right] }r)}=0%
\text{ if }q<m.
\end{equation*}
\end{theorem}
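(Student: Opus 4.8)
The plan is to follow the proof of Theorem~\ref{t1} almost verbatim, with one crucial modification dictated by the weaker hypothesis. Since we now only assume $\lambda_g(m,n)<\lambda_f(p,q)$, I would first fix $\varepsilon>0$ so small that $\lambda_g(m,n)+\varepsilon<\lambda_f(p,q)-\varepsilon$. The upper estimate for the composite coming from Lemma~\ref{l1} together with the definition of $\rho_f(p,q)$ is unchanged: for all sufficiently large $r$,
$$\log^{[p-1]}T_{f\circ g}(\exp^{[n-1]}r)\leq (\rho_f(p,q)+\varepsilon)\log^{[q]}M_g(\exp^{[n-1]}r)+O(1),$$
which is exactly inequality (2x) of the previous proof.

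The decisive difference is in how I bound $M_g$. In Theorem~\ref{t1} the upper order $\rho_g(m,n)$ furnishes $\log^{[m]}M_g(\exp^{[n-1]}r)\leq(\rho_g(m,n)+\varepsilon)\log r$ for \emph{all} large $r$. Here I would instead invoke the definition of the lower order as a liminf: since $\lambda_g(m,n)=\underline{\lim}_{r\to\infty}\log^{[m]}M_g(r)/\log^{[n]}r$, the inequality $\log^{[m]}M_g(\exp^{[n-1]}r)\leq(\lambda_g(m,n)+\varepsilon)\log r$ holds only along a sequence of values of $r$ tending to infinity. Propagating this through the two cases exactly as in (4x)--(5x) and (6x)--(8x), with $\rho_g$ replaced by $\lambda_g$, yields along that same sequence the Case~I bound $\log^{[p-1]}T_{f\circ g}(\exp^{[n-1]}r)\leq(\rho_f(p,q)+\varepsilon)r^{\lambda_g(m,n)+\varepsilon}+O(1)$ and the Case~II bound $\log^{[p+m-q-2]}T_{f\circ g}(\exp^{[n-1]}r)\leq r^{\lambda_g(m,n)+\varepsilon}+O(1)$.

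For the denominator I would reuse (9x) verbatim: by Lemma~\ref{l9} we have $\lambda_F(p,q)=\lambda_f(p,q)$, whence $\log^{[p-2]}T_F(\exp^{[q-1]}r)\geq r^{\lambda_f(p,q)-\varepsilon}$ for all sufficiently large $r$. Dividing the Case~I (resp. Case~II) numerator bound by this denominator bound, the right-hand side is dominated by $(\rho_f(p,q)+\varepsilon)r^{(\lambda_g(m,n)+\varepsilon)-(\lambda_f(p,q)-\varepsilon)}+o(1)$ (resp. $r^{(\lambda_g(m,n)+\varepsilon)-(\lambda_f(p,q)-\varepsilon)}+o(1)$), and by the choice of $\varepsilon$ in the first step the exponent is negative, so this tends to $0$ along the chosen sequence.

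The only real point to watch --- and the reason the conclusion weakens from a genuine limit to a lower limit --- is the bookkeeping of quantifiers: the numerator estimate now holds only along a sequence $r_k\to\infty$, whereas the denominator estimate holds for all large $r$. Since the ratio is nonnegative and is dominated along $\{r_k\}$ by a quantity tending to $0$, its lower limit must be $0$; one cannot upgrade this to an ordinary limit because nothing controls $\log^{[m]}M_g$ from above off the sequence. This is precisely the mechanism by which lower-order hypotheses produce liminf statements, and it is the single step where the argument genuinely departs from that of Theorem~\ref{t1}.
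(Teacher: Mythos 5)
Your proposal is correct and follows essentially the same route as the paper: it reuses the estimates (2x), (3x), (6x) and (9x) from Theorem \ref{t1}, replaces the $\rho_g(m,n)$ bound on $\log^{[m]}M_g$ by the $\lambda_g(m,n)$ bound valid only along a sequence, and correctly observes that this is exactly why the conclusion degrades from a limit to a lower limit. No gaps.
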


\begin{proof}
For a sequence of values of $r$ tending to infinity that%
\begin{eqnarray}
\log ^{\left[ m\right] }M_{g}\left( \exp ^{\left[ n-1\right] }r\right)
&\leqslant &\left( \lambda _{g}(m,n)+\varepsilon \right) \log ^{\left[ n%
\right] }\exp ^{\left[ n-1\right] }r  \notag \\
i.e.,~\log ^{\left[ m\right] }M_{g}\left( \exp ^{\left[ n-1\right] }r\right)
&\leqslant &\log r^{\lambda _{g}(m,n)+\varepsilon }  \notag \\
i.e.,~\log ^{\left[ m-1\right] }M_{g}\left( \exp ^{\left[ n-1\right]
}r\right) &\leqslant &\log r^{\lambda _{g}(m,n)+\varepsilon }~.  \label{12x}
\end{eqnarray}%
Now from $\left( \ref{3x}\right) $ and $\left( \ref{12x}\right) $ we get for
a sequence of values of $r$ tending to infinity that%
\begin{equation}
\log ^{\left[ p-1\right] }T_{f\circ g}\left( \exp ^{\left[ n-1\right]
}r\right) \leqslant \left( \rho _{f}\left( p,q\right) +\varepsilon \right)
r^{\lambda _{g}(m,n)+\varepsilon }+O(1).  \label{13x}
\end{equation}%
Combining $\left( \ref{9x}\right) $ and $\left( \ref{13x}\right) $ we obtain
for a sequence of values of $r$ tending to infinity that%
\begin{equation}
\frac{\log ^{\left[ p-1\right] }T_{f\circ g}\left( \exp ^{\left[ n-1\right]
}r\right) }{\log ^{\left[ p-2\right] }T_{F}(\exp ^{\left[ q-1\right] }r)}%
\leq \frac{\left( \rho _{f}\left( p,q\right) +\varepsilon \right) r^{\lambda
_{g}(m,n)+\varepsilon }+O(1)}{r^{(\lambda _{f}(p,q)-\varepsilon )}}.
\label{14x}
\end{equation}%
Now in view of $\left( \ref{1x}\right) $ we have from $\left( \ref{14x}%
\right) $ that%
\begin{equation*}
\underset{r\rightarrow \infty }{\underline{\lim }}\frac{\log ^{\left[ p-1%
\right] }T_{f\circ g}\left( \exp ^{\left[ n-1\right] }r\right) }{\log ^{%
\left[ p-2\right] }T_{F}(\exp ^{\left[ q-1\right] }r)}=0~.
\end{equation*}%
This proves the first part of the theorem.

Again for a sequence of values of $r$ tending to infinity that%
\begin{eqnarray}
\log ^{\left[ m\right] }M_{g}\left( \exp ^{\left[ n-1\right] }r\right)
&\leqslant &\left( \lambda _{g}(m,n)+\varepsilon \right) \log ^{\left[ n%
\right] }\exp ^{\left[ n-1\right] }r  \notag \\
i.e.,~\log ^{\left[ m\right] }M_{g}\left( \exp ^{\left[ n-1\right] }r\right)
&\leqslant &\log r^{\left( \lambda _{g}(m,n)+\varepsilon \right) }  \notag \\
i.e.,~\exp ^{\left[ m-q\right] }\log ^{\left[ m\right] }M_{g}\left( \exp ^{%
\left[ n-1\right] }r\right) &\leqslant &\exp ^{\left[ m-q\right] }\log
r^{\left( \lambda _{g}(m,n)+\varepsilon \right) }  \notag \\
i.e.,~\exp ^{\left[ m-q\right] }\log ^{\left[ m\right] }M_{g}\left( \exp ^{%
\left[ n-1\right] }r\right) &\leqslant &\exp ^{\left[ m-q-1\right]
}r^{\left( \lambda _{g}(m,n)+\varepsilon \right) }.  \label{15x}
\end{eqnarray}%
Now from $\left( \ref{6x}\right) $ and $\left( \ref{15x}\right) $ we have
for a sequence of values of $r$ tending to infinity that%
\begin{eqnarray*}
\log ^{\left[ p-1\right] }T_{f\circ g}\left( \exp ^{\left[ n-1\right]
}r\right) &\leqslant &\left( \rho _{f}\left( p,q\right) +\varepsilon \right)
\exp ^{\left[ m-q-1\right] }r^{\left( \lambda _{g}(m,n)+\varepsilon \right)
}+O(1) \\
i.e.,~\log ^{\left[ p\right] }T_{f\circ g}\left( \exp ^{\left[ n-1\right]
}r\right) &\leqslant &\exp ^{\left[ m-q-2\right] }r^{\left( \lambda
_{g}(m,n)+\varepsilon \right) }+O(1)
\end{eqnarray*}%
\begin{equation*}
i.e.,~\log ^{\left[ p+m-q-2\right] }T_{f\circ g}\left( \exp ^{\left[ n-1%
\right] }r\right) \leqslant \log ^{[m-q-2]}\exp ^{\left[ m-q-2\right]
}r^{\left( \lambda _{g}(m,n)+\varepsilon \right) }+O(1)
\end{equation*}%
\begin{equation}
i.e.,~\log ^{\left[ p+m-q-2\right] }T_{f\circ g}\left( \exp ^{\left[ n-1%
\right] }r\right) \leqslant r^{\left( \lambda _{g}(m,n)+\varepsilon \right)
}+O(1).~\ \ \ \ \ \ \ \ \ \ \ \ \ \ \ \ \ \ \ \ \ \ \ \   \label{16x}
\end{equation}%
Combining $\left( \ref{9x}\right) $ and $\left( \ref{16x}\right) $ we obtain
for a sequence of values of $r$ tending to infinity that%
\begin{equation}
\frac{\log ^{\left[ p+m-q-2\right] }T_{f\circ g}\left( \exp ^{\left[ n-1%
\right] }r\right) }{\log ^{\left[ p-2\right] }T_{F}(\exp ^{\left[ q-1\right]
}r)}\leq \frac{r^{\left( \lambda _{g}(m,n)+\varepsilon \right) }+O(1)}{%
r^{\lambda _{f}(p,q)-\varepsilon }}.  \label{17x}
\end{equation}%
Now in view of $\left( \ref{1x}\right) $ it follows from $\left( \ref{17x}%
\right) $ that%
\begin{equation*}
\underset{r\rightarrow \infty }{\underline{\lim }}\frac{\log ^{\left[ p+m-q-2%
\right] }T_{f\circ g}\left( \exp ^{\left[ n-1\right] }r\right) }{\log ^{%
\left[ p-2\right] }T_{F}(\exp ^{\left[ q-1\right] }r)}=0.
\end{equation*}%
This establishes the second part of the theorem.
\end{proof}

\begin{theorem}
\label{t3} Let $g$ be an entire function and $f$\ be a transcendental
meromorphic function such that $0<\lambda _{f}(p,q)\leq \rho
_{f}(p,q)<\infty $ where $p$ and $q$ are any two positive integers with $%
p\geq q$. Also let $F=f^{\alpha }Q\left[ f\right] $ where $Q\left[ f\right] $
is a differential polynomial in $f$, then for any $\alpha \geq 1$%
\begin{equation*}
(i)~\overline{\underset{r\rightarrow \infty }{\lim }}\frac{\log ^{\left[ p-1%
\right] }T_{f\circ g}(r)}{\log ^{\left[ p-1\right] }T_{F}(\exp \left( r^{\mu
}\right) )}=\infty \text{ if }q=1~\ \ \ \ \ \ \ \ \ \ \ \ \ 
\end{equation*}%
\begin{equation*}
(ii)~\overline{\underset{r\rightarrow \infty }{\lim }}\frac{\log ^{\left[ p-1%
\right] }T_{f\circ g}(r)}{\log ^{\left[ p-1\right] }T_{F}(\exp \left( r^{\mu
}\right) )}\geq \frac{\beta \lambda _{f}\left( p,q\right) }{\mu \rho
_{f}\left( p,q\right) }\text{ if }q=2
\end{equation*}%
and%
\begin{equation*}
(iii)~\overline{\underset{r\rightarrow \infty }{\lim }}\frac{\log ^{\left[
p-1\right] }T_{f\circ g}(r)}{\log ^{\left[ p-1\right] }T_{F}(\exp \left(
r^{\mu }\right) )}\geq \frac{\lambda _{f}\left( p,q\right) }{\rho _{f}\left(
p,q\right) }\text{ if }q>2~\ 
\end{equation*}%
where $0<\mu <\beta <\rho _{g}$ .
\end{theorem}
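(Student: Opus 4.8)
The plan is to bound the numerator from below by Lemma \ref{l7} (applied with the exponent $\beta$) and to bound the denominator from above via the definition of $(p,q)$-th order together with Lemma \ref{l9}; the three cases then arise solely from how the iterated logarithm $\log ^{[q-1]}$ transforms the two factors $r^{\beta }$ and $r^{\mu }$. The crucial structural point is the asymmetry $\mu <\beta $: putting $\beta $ upstairs and $\mu $ downstairs is exactly what produces a divergent ratio when $q=1$ and the constant $\beta /\mu $ when $q=2$.

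First I would fix $\varepsilon >0$ small enough that $\lambda _{f}(p,q)-\varepsilon >0$ (possible since $\lambda _{f}(p,q)>0$). As $0<\beta <\rho _{g}$, Lemma \ref{l7} gives, for a sequence of values of $r$ tending to infinity, $T_{f\circ g}(r)\geq T_{f}\left( \exp \left( r^{\beta }\right) \right) $. Applying $\log ^{[p-1]}$ and then the defining inequality for the $(p,q)$-th lower order of $f$ (valid for all large arguments), and using $\log ^{[q]}\left( \exp \left( r^{\beta }\right) \right) =\log ^{[q-1]}\left( r^{\beta }\right) $, I obtain along that sequence
\[
\log ^{[p-1]}T_{f\circ g}(r)\geq \left( \lambda _{f}(p,q)-\varepsilon \right) \log ^{[q-1]}\left( r^{\beta }\right) .
\]

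For the denominator, Lemma \ref{l9} yields $\rho _{F}(p,q)=\rho _{f}(p,q)$, so the definition of $(p,q)$-th order gives, for all sufficiently large $r$,
\[
\log ^{[p-1]}T_{F}\left( \exp \left( r^{\mu }\right) \right) \leq \left( \rho _{f}(p,q)+\varepsilon \right) \log ^{[q-1]}\left( r^{\mu }\right) .
\]
Dividing the two estimates, I get along the same sequence
\[
\frac{\log ^{[p-1]}T_{f\circ g}(r)}{\log ^{[p-1]}T_{F}\left( \exp \left( r^{\mu }\right) \right) }\geq \frac{\lambda _{f}(p,q)-\varepsilon }{\rho _{f}(p,q)+\varepsilon }\cdot \frac{\log ^{[q-1]}\left( r^{\beta }\right) }{\log ^{[q-1]}\left( r^{\mu }\right) }.
\]

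The remaining work, which is the only real subtlety, is the asymptotic analysis of the quotient $\log ^{[q-1]}\left( r^{\beta }\right) /\log ^{[q-1]}\left( r^{\mu }\right) $. For $q=1$ it equals $r^{\beta -\mu }\rightarrow \infty $ (here $\beta >\mu $ is essential), which forces the $\overline{\lim }$ to be $\infty $ and proves $(i)$. For $q=2$ it equals $\left( \beta \log r\right) /\left( \mu \log r\right) =\beta /\mu $ identically, so after letting $\varepsilon \rightarrow 0$ the $\overline{\lim }$ is at least $\beta \lambda _{f}(p,q)/\left( \mu \rho _{f}(p,q)\right) $, giving $(ii)$. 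For $q>2$ one has $\log ^{[q-1]}\left( r^{\beta }\right) =\log ^{[q-2]}\left( \beta \log r\right) $, and since two or more further logarithms annihilate the additive constant $\log \beta $, both numerator and denominator are asymptotic to $\log ^{[q-1]}r$, so the quotient tends to $1$; letting $\varepsilon \rightarrow 0$ then gives $(iii)$. The main obstacle is thus nothing more than carrying out this case-by-case control of iterated logarithms cleanly; the two lemmas supply everything else.
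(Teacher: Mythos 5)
Your proposal is correct and follows essentially the same route as the paper: Lemma \ref{l7} (with the exponent $\beta$, which the paper's proof miswrites as $\alpha$) for the lower bound on the numerator along a sequence, Lemma \ref{l9} together with the definition of $(p,q)$-th order for the upper bound on the denominator, and then the division yielding the ratio $\log ^{[q-1]}\left( r^{\beta }\right) /\log ^{[q-1]}\left( r^{\mu }\right) $. You actually carry out the final case analysis ($q=1$, $q=2$, $q>2$) more explicitly than the paper, which simply asserts that the theorem follows from the displayed inequality since $\mu <\beta $.
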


\begin{proof}
Since $0<\mu <\beta <\rho _{g},$ then from Lemma \ref{l7} we obtain for a
sequence of values of $r$ tending to infinity that%
\begin{eqnarray}
\log ^{\left[ p-1\right] }T_{f\circ g}(r) &\geq &\log ^{\left[ p-1\right]
}T_{f}\left( \exp \left( r^{\alpha }\right) \right)  \notag \\
i.e.,~\log ^{\left[ p-1\right] }T_{f\circ g}(r) &\geq &(\lambda _{f}\left(
p,q\right) -\varepsilon )\log ^{\left[ q\right] }\exp \left( r^{\alpha
}\right)  \notag \\
i.e.,~\log ^{\left[ p-1\right] }T_{f\circ g}(r) &\geq &(\lambda _{f}\left(
p,q\right) -\varepsilon )\log ^{\left[ q-1\right] }\left( r^{\alpha }\right)
~.  \label{20x}
\end{eqnarray}%
Again from the definition of $\rho _{F}\left( p,q\right) $ it follows in
view of Lemma \ref{l9}, for all sufficiently large values of $r$ that%
\begin{equation*}
\log ^{\left[ p-1\right] }T_{F}(\exp \left( r^{\mu }\right) )\leq \left(
\rho _{F}\left( p,q\right) +\varepsilon \right) \log ^{\left[ q\right] }\exp
\left( r^{\mu }\right)
\end{equation*}%
\begin{equation}
i.e.,~\log ^{\left[ p-1\right] }T_{F}(\exp \left( r^{\mu }\right) )\leq
\left( \rho _{f}\left( p,q\right) +\varepsilon \right) \log ^{\left[ q-1%
\right] }\left( r^{\mu }\right) ~.  \label{21x}
\end{equation}%
Thus from $\left( \ref{20x}\right) $ and $\left( \ref{21x}\right) $ we have
for a sequence of values of $r$ tending to infinity that%
\begin{equation}
\frac{\log ^{\left[ p-1\right] }T_{f\circ g}(r)}{\log ^{\left[ p-1\right]
}T_{F}(\exp \left( r^{\mu }\right) )}\geq \frac{(\lambda _{f}\left(
p,q\right) -\varepsilon )\log ^{\left[ q-1\right] }\left( r^{\alpha }\right) 
}{\left( \rho _{f}\left( p,q\right) +\varepsilon \right) \log ^{\left[ q-1%
\right] }\left( r^{\mu }\right) }~.  \label{22x}
\end{equation}%
Since $\mu <\beta $, the theorem follows from $\left( \ref{22x}\right) .$
\end{proof}

\begin{theorem}
\label{t4} Let $f$\ be a transcendental meromorphic function and $g$ be an
entire function such that $0<\lambda _{f}(p,q)\leq \rho _{f}(p,q)<\infty $
and $\rho _{g}(m,n)$ $<$ $\infty $ where $p,q,m,n$ are positive integers
with $p\geq q$ and $m\geq n$. Also let $F=f^{\alpha }Q\left[ f\right] $
where $Q\left[ f\right] $ is a differential polynomial in $f$, then for any $%
\alpha \geq 1$%
\begin{equation*}
(i)~\overline{\underset{r\rightarrow \infty }{\lim }}\frac{\log ^{\left[ p%
\right] }T_{f\circ g}\left( \exp ^{\left[ n-1\right] }r\right) }{\log ^{%
\left[ p-1\right] }T_{F}(\exp ^{\left[ q-1\right] }r)}\leqslant \frac{\rho
_{g}(m,n)}{\lambda _{f}(p,q)}~\text{if }q\geq m~\ \ \ \ \ \ \ \ \ \ \ 
\end{equation*}%
and%
\begin{equation*}
(ii)~\overline{\underset{r\rightarrow \infty }{\lim }}\frac{\log ^{\left[
p+m-q-1\right] }T_{f\circ g}\left( \exp ^{\left[ n-1\right] }r\right) }{\log
^{\left[ p-1\right] }T_{F}(\exp ^{\left[ q-1\right] }r)}\leqslant \frac{\rho
_{g}(m,n)}{\lambda _{f}(p,q)}~\text{if }q<m~.
\end{equation*}
\end{theorem}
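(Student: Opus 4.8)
The plan is to recycle the numerator estimates already obtained in the proof of Theorem \ref{t1}, observing that those estimates were derived using only $\rho_f(p,q)<\infty$ and $\rho_g(m,n)<\infty$ and never invoked the ordering hypothesis $\rho_g(m,n)<\lambda_f(p,q)$; they therefore remain valid here. First I would fix $\varepsilon>0$ small enough that $\lambda_f(p,q)-\varepsilon>0$, which is possible since $\lambda_f(p,q)>0$. Unlike in Theorem \ref{t1}, no relation between $\rho_g(m,n)$ and $\lambda_f(p,q)$ is assumed, so the outcome will be a finite ratio bound rather than $0$.

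For the numerator in Case I ($q\geq m$), I would take one further logarithm of inequality $\left(\ref{5x}\right)$ to obtain, for all large $r$, the estimate $\log^{[p]}T_{f\circ g}(\exp^{[n-1]}r)\leq(\rho_g(m,n)+\varepsilon)\log r+O(1)$. In Case II ($q<m$), I would instead apply one further logarithm to $\left(\ref{8x}\right)$, which yields $\log^{[p+m-q-1]}T_{f\circ g}(\exp^{[n-1]}r)\leq(\rho_g(m,n)+\varepsilon)\log r+O(1)$. This single extra logarithm is exactly what raises the numerator index from $p-1$ to $p$ in part $(i)$ and from $p+m-q-2$ to $p+m-q-1$ in part $(ii)$, matching the stated ratios.

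For the denominator I would use the definition of $\lambda_F(p,q)$ together with Lemma \ref{l9}, which gives $\lambda_F(p,q)=\lambda_f(p,q)$. This produces, for all sufficiently large $r$, the lower bound $\log^{[p-1]}T_F(\exp^{[q-1]}r)\geq(\lambda_f(p,q)-\varepsilon)\log r$; this is precisely the intermediate line in the derivation of $\left(\ref{9x}\right)$, one logarithm before its final form. Dividing each numerator estimate by this denominator bound gives a ratio dominated by $\dfrac{(\rho_g(m,n)+\varepsilon)\log r+O(1)}{(\lambda_f(p,q)-\varepsilon)\log r}$, whose upper limit as $r\to\infty$ equals $\dfrac{\rho_g(m,n)+\varepsilon}{\lambda_f(p,q)-\varepsilon}$.

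Letting $\varepsilon\to 0$ then delivers the asserted bound $\dfrac{\rho_g(m,n)}{\lambda_f(p,q)}$ in both parts. I do not anticipate a genuine obstacle: the argument is the Theorem \ref{t1} computation carried one logarithm further, and the key point is simply recognizing that the numerator bounds $\left(\ref{5x}\right)$ and $\left(\ref{8x}\right)$ are insensitive to the ordering condition and hence available. The only care needed is bookkeeping of the iterated-logarithm indices, so that Case II lands correctly on $\log^{[p+m-q-1]}$ and the ratio assumes the stated form.
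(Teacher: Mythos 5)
Your proposal is correct and follows essentially the same route as the paper: the paper likewise reuses the numerator bounds $\left(\ref{5x}\right)$ and $\left(\ref{8x}\right)$ from Theorem \ref{t1} (which indeed do not depend on the ordering hypothesis), takes one further logarithm, and divides by the denominator bound $\log ^{\left[ p-1\right] }T_{F}(\exp ^{\left[ q-1\right] }r)\geq \left( \lambda _{f}(p,q)-\varepsilon \right) \log r$ obtained via Lemma \ref{l9}, before letting $\varepsilon \rightarrow 0$.
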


\begin{proof}
In view of Lemma \ref{l9}, we have for all sufficiently large values of $r$
that%
\begin{eqnarray}
\log ^{\left[ p-1\right] }T_{F}(\exp ^{\left[ q-1\right] }r) &\geq &\left(
\lambda _{F}(p,q)-\varepsilon \right) \log ^{\left[ q\right] }\exp ^{\left[
q-1\right] }r  \notag \\
i.e.,~\log ^{\left[ p-1\right] }T_{F}(\exp ^{\left[ q-1\right] }r) &\geq
&\left( \lambda _{f}(p,q)-\varepsilon \right) \log r~.  \label{23x}
\end{eqnarray}%
\textbf{Case I.} If $q\geqslant m$ , then from $\left( \ref{5x}\right) $ and 
$\left( \ref{23x}\right) $ we get for all sufficiently large values of $r$
that

\begin{equation*}
\frac{\log ^{\left[ p\right] }T_{f\circ g}\left( \exp ^{\left[ n-1\right]
}r\right) }{\log ^{\left[ p-1\right] }T_{F}(\exp ^{\left[ q-1\right] }r)}%
\leqslant \frac{\left( \rho _{g}(m,n)+\varepsilon \right) \log r+O(1)}{%
\left( \lambda _{f}(p,q)-\varepsilon \right) \log r}~.
\end{equation*}%
Since $\varepsilon \left( >0\right) $ is arbitrary, it follows from above
that%
\begin{equation*}
\overline{\underset{r\rightarrow \infty }{\lim }}\frac{\log ^{\left[ p\right]
}T_{f\circ g}\left( \exp ^{\left[ n-1\right] }r\right) }{\log ^{\left[ p-1%
\right] }T_{F}(\exp ^{\left[ q-1\right] }r)}\leqslant \frac{\rho _{g}(m,n)}{%
\lambda _{f}(p,q)}~.
\end{equation*}%
This proves the first part of the theorem.$\newline
$\textbf{Case II.} If $q<m$ then from $\left( \ref{8x}\right) $ and $\left( %
\ref{23x}\right) $ we obtain for all sufficiently large values of $r$ that%
\begin{equation*}
\frac{\log ^{\left[ p+m-q-1\right] }T_{f\circ g}\left( \exp ^{\left[ n-1%
\right] }r\right) }{\log ^{\left[ p-1\right] }T_{F}(\exp ^{\left[ q-1\right]
}r)}\leqslant \frac{\left( \rho _{g}(m,n)+\varepsilon \right) \log r+O(1)}{%
\left( \lambda _{f}(p,q)-\varepsilon \right) \log r}~.
\end{equation*}%
As $\varepsilon \left( >0\right) $ is arbitrary, it follows from above that%
\begin{equation*}
\overline{\underset{r\rightarrow \infty }{\lim }}\frac{\log ^{\left[ p+m-q-1%
\right] }T_{f\circ g}\left( \exp ^{\left[ n-1\right] }r\right) }{\log ^{%
\left[ p-1\right] }T_{F}(\exp ^{\left[ q-1\right] }r)}\leqslant \frac{\rho
_{g}(m,n)}{\lambda _{f}(p,q)}~.
\end{equation*}%
Thus the second part of the theorem is established.
\end{proof}

\begin{theorem}
\label{t5} If $f$\ be meromorphic and $g$ be a transcendental entire such
that $\rho _{f}(p,q)$ and $\lambda _{g}^{\left[ l\right] }$ are both finite
where $p,q,l$ are positive integers with $p>q$ and $l\geq 2$. Also let $%
G=g^{\alpha }Q\left[ g\right] $ where $Q\left[ g\right] $ is a differential
polynomial in $g$, then for any $\alpha \geq 1$%
\begin{eqnarray*}
\left( i\right) ~~\ \ \ \ ~\underset{r\rightarrow \infty }{\underline{\lim }}%
\frac{\log ^{\left[ p-1\right] }T_{f\circ g}\left( r\right) }{\log ^{\left[
l-2\right] }T_{G}(r)} &\leqslant &\rho _{f}(p,q).2^{^{\lambda _{g}^{\left[ l%
\right] }}}\text{ if }q\geq l-1>1~, \\
\left( ii\right) ~\underset{r\rightarrow \infty }{\underline{\lim }}\frac{%
\log ^{\left[ p+l-q-2\right] }T_{f\circ g}\left( r\right) }{\log ^{\left[ l-2%
\right] }T_{G}(r)} &\leqslant &2^{^{\lambda _{g}^{\left[ l\right] }}}\text{
if }q<l-1,
\end{eqnarray*}%
and%
\begin{equation*}
\left( iii\right) ~\underset{r\rightarrow \infty }{\underline{\lim }}\frac{%
\log ^{\left[ p-1\right] }T_{f\circ g}\left( r\right) }{T_{G}(r)}\leqslant
3\beta \cdot \rho _{f}(p,q).2^{^{\lambda _{g}}}\text{ if }q\geq l-1=1,
\end{equation*}%
where $\beta >1.$
\end{theorem}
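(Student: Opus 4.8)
The plan is to dominate the composite characteristic by Lemma~\ref{l1}, to translate the resulting maximum modulus of $g$ into the characteristic of $G$ by the classical inequalities $T_g(r)\le\log^{+}M_g(r)\le 3\,T_g(2r)$ combined with Lemma~\ref{l8}, and then to extract the factor $2^{\lambda_g^{[l]}}$ from the lower proximate order of $G$ by means of Lemma~\ref{l5}. Throughout, $\varepsilon,\delta>0$ will be arbitrary, $\beta>1$ is a constant with $T_g(r)\le\beta\,T_G(r)$ for large $r$ (available from Lemma~\ref{l8}), and $\lambda_G^{[l]}(r)$ denotes the generalized lower proximate order of the \emph{entire} function $G=g^{\alpha}Q[g]$ relative to $T_G(r)$, which exists by Lahiri's theorem and satisfies $\lambda_G^{[l]}=\lambda_g^{[l]}$ by Lemma~\ref{l9}.

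I would first record the common numerator estimate. Exactly as in the proof of Theorem~\ref{t1}, Lemma~\ref{l1} together with $T_g\le\log^{+}M_g$ gives, for all large $r$,
\[
\log^{[p-1]}T_{f\circ g}(r)\le \log^{[p-1]}T_f\big(M_g(r)\big)+O(1)\le\big(\rho_f(p,q)+\varepsilon\big)\log^{[q]}M_g(r)+O(1).
\]
Feeding in $\log M_g(r)\le 3\beta\,T_G(2r)$ turns $\log^{[l-1]}M_g(r)$ into $\log^{[l-2]}T_G(2r)+O(1)$ as soon as $l\ge 3$, the constant $3\beta$ being absorbed by at least one outer logarithm; for $l=2$ the factor $3\beta$ cannot be absorbed and survives, which is precisely why it appears in part $(iii)$ but not in $(i)$ or $(ii)$. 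In the range $q\ge l-1$ of part $(i)$ one uses the estimate directly (the representative case is $q=l-1$, larger $q$ only making the left-hand side smaller and the inequality easier), whereas in the range $q<l-1$ of part $(ii)$ one applies the further operator $\log^{[l-q-1]}$ to both sides: this raises the numerator index from $p-1$ to $p+l-q-2$ and, under the very first extra logarithm, washes out the factor $\rho_f(p,q)$, which explains its absence in $(ii)$. In every case one reaches a single shape
\[
\log^{[N]}T_{f\circ g}(r)\le C\,\log^{[l-2]}T_G(2r)+O(1),
\]
with $(N,C)=(p-1,\rho_f(p,q)+\varepsilon)$ in $(i)$, $(p+l-q-2,1)$ in $(ii)$, and $(p-1,3\beta(\rho_f(p,q)+\varepsilon))$ in $(iii)$.

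The step I expect to be most delicate is passing from the argument $2r$ back to $r$ while generating exactly the constant $2^{\lambda_g^{[l]}}$. The lower proximate order gives only one-sided control: by property $(v)$ of Definition~\ref{d3} one has $\log^{[l-2]}T_G(r)\ge(1-\varepsilon)\,r^{\lambda_G^{[l]}(r)}$ for all large $r$, but the matching upper bound holds only along a subsequence. I would therefore fix a sequence $s_k\to\infty$ along which $\log^{[l-2]}T_G(s_k)\le(1+\varepsilon)\,s_k^{\lambda_G^{[l]}(s_k)}$ and evaluate the whole quotient along the \emph{shifted} sequence $r_k=s_k/2$. On this sequence the numerator is controlled at argument $2r_k=s_k$ by the subsequential upper bound while the denominator is controlled at argument $r_k$ by the universal lower bound, yielding
\[
\frac{\log^{[N]}T_{f\circ g}(r_k)}{\log^{[l-2]}T_G(r_k)}\le\frac{C\,(1+\varepsilon)}{1-\varepsilon}\cdot\frac{(2r_k)^{\lambda_G^{[l]}(2r_k)}}{r_k^{\lambda_G^{[l]}(r_k)}}+o(1).
\]

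Finally I would invoke Lemma~\ref{l5}: since $r^{\lambda_G^{[l]}+\delta-\lambda_G^{[l]}(r)}$ is ultimately increasing, comparing its values at $r_k$ and $2r_k$ gives $(2r_k)^{\lambda_G^{[l]}(2r_k)}/r_k^{\lambda_G^{[l]}(r_k)}\le 2^{\lambda_G^{[l]}+\delta}=2^{\lambda_g^{[l]}+\delta}$. Letting $k\to\infty$ and then $\varepsilon,\delta\to 0$ collapses the right-hand constant to $C\cdot 2^{\lambda_g^{[l]}}$, namely $\rho_f(p,q)\,2^{\lambda_g^{[l]}}$ in $(i)$, $2^{\lambda_g^{[l]}}$ in $(ii)$, and $3\beta\,\rho_f(p,q)\,2^{\lambda_g}$ in $(iii)$ with $l=2$ and $\lambda_g^{[2]}=\lambda_g$. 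Since the limit inferior over all $r$ does not exceed the limit along $\{r_k\}$, the three asserted inequalities follow. The real work is bookkeeping: verifying that the additive $O(1)$'s and the multiplicative $3\beta$ are absorbed under exactly the right number of logarithms (they are, precisely when $l\ge 3$), and checking that the shift $r_k=s_k/2$ legitimately marries the subsequential upper bound to the universal lower bound.
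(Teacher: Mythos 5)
Your proposal follows the paper's own argument essentially step for step: Lemma \ref{l1} together with $T_g(r)\le\log^{+}M_g(r)$ for the numerator, the case split on $q$ versus $l-1$ (with the extra $\log^{[l-q-1]}$ washing out $\rho_f(p,q)$ in part (ii)), the chain $\log M_g(r)\le 3T_g(2r)$ combined with Lemma \ref{l8}, and property (v) of the generalized lower proximate order together with Lemma \ref{l5} to produce the factor $2^{\lambda_g^{[l]}+\delta}$ before letting $\varepsilon,\delta\to 0$. The only cosmetic differences are that you run the proximate-order step on $T_G$ directly (using $\lambda_G^{[l]}=\lambda_g^{[l]}$ from Lemma \ref{l9}) where the paper runs it on $T_g$ and passes to $T_G$ afterwards via $T_g(r)=O\{T_G(r)\}$, and that your explicit shift $r_k=s_k/2$ marrying the subsequential upper bound at $2r$ to the universal lower bound at $r$ makes precise a step the paper leaves implicit.
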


\begin{proof}
As $\varepsilon \left( >0\right) $ is arbitrary and $T_{g}\left( r\right)
\leqslant \log ^{+}M_{g}\left( r\right) $ \{cf. \cite{r3} \}, we have from
Lemma \ref{l1} for all sufficiently large values of $r$ that%
\begin{equation}
\log ^{\left[ p-1\right] }T_{f\circ g}\left( r\right) \leqslant (\rho
_{f}(p,q)+\varepsilon )\log ^{\left[ q\right] }M_{g}\left( r\right) +O(1)~.
\label{11}
\end{equation}%
\textbf{Case I. }Let $q\geq l-1.$ Then from $\left( \ref{11}\right) $ we
obtain for all sufficiently large values of $r$ that%
\begin{equation}
\log ^{\left[ p-1\right] }T_{f\circ g}\left( r\right) \leqslant (\rho
_{f}(p,q)+\varepsilon )\log ^{\left[ l-1\right] }M_{g}\left( r\right) +O(1).
\notag
\end{equation}%
Since $\varepsilon \left( >0\right) $ we get from above that%
\begin{equation}
\underset{r\rightarrow \infty }{\underline{\lim }}\frac{\log ^{\left[ p-1%
\right] }T_{f\circ g}\left( r\right) }{\log ^{\left[ l-2\right] }T_{G}(r)}%
\leqslant \rho _{f}(p,q)\cdot \underset{r\rightarrow \infty }{\underline{%
\lim }}\frac{\log ^{\left[ l-1\right] }M_{g}\left( r\right) }{\log ^{\left[
l-2\right] }T_{G}(r)}~.  \label{13}
\end{equation}%
\textbf{Case II. }Let $q<l-1.$ Then from $\left( \ref{11}\right) $ we get
for all sufficiently large values of $r$ that%
\begin{eqnarray*}
\log ^{\left[ l-q-1\right] }\log ^{\left[ p-1\right] }T_{f\circ g}\left(
r\right) &\leqslant &\log ^{\left[ l-q-1\right] }\left\{ (\rho
_{f}(p,q)+\varepsilon )\log ^{\left[ q\right] }M_{g}\left( r\right)
+O(1)\right\} \\
i.e.,~\ \ \log ^{\left[ p+l-q-2\right] }T_{f\circ g}\left( r\right)
&\leqslant &\log ^{\left[ l-1\right] }M_{g}\left( r\right) +O(1) \\
i.e.,~\ \ \ \frac{\log ^{\left[ p+l-q-2\right] }T_{f\circ g}\left( r\right) 
}{\log ^{\left[ l-2\right] }T_{G}(r)} &\leqslant &\frac{\log ^{\left[ l-1%
\right] }M_{g}\left( r\right) +O(1)}{\log ^{\left[ l-2\right] }T_{G}(r)}~.
\end{eqnarray*}%
Therefore we get from above that%
\begin{equation}
\underset{r\rightarrow \infty }{\underline{\lim }}\frac{\log ^{\left[ p+l-q-2%
\right] }T_{f\circ g}\left( r\right) }{\log ^{\left[ l-2\right] }T_{G}(r)}%
\leqslant \underset{r\rightarrow \infty }{\underline{\lim }}\frac{\log ^{%
\left[ l-1\right] }M_{g}\left( r\right) }{\log ^{\left[ l-2\right] }T_{G}(r)}%
~.  \label{14}
\end{equation}

Now let $l>2.$ Since $\underset{r\rightarrow \infty }{\underline{\lim }}%
\frac{\log ^{\left[ l-2\right] }T_{g}(r)}{r^{\lambda _{g}^{\left[ l\right]
}\left( r\right) }}=1,$ for given $\varepsilon \left( 0<\varepsilon
<1\right) $ we get for a sequence of values of $r$ tending to infinity that%
\begin{equation*}
\log ^{\left[ l-2\right] }T_{g}(r)<(1+\varepsilon )r^{\lambda _{g}^{\left[ l%
\right] }\left( r\right) }
\end{equation*}%
and for all sufficiently large values of $r,$%
\begin{equation*}
\log ^{\left[ l-2\right] }T_{g}(r)>(1-\varepsilon )r^{\lambda _{g}^{\left[ l%
\right] }\left( r\right) }.
\end{equation*}

Since $\log M_{g}(r)\leq 3T_{g}(2r)$ \{cf. \cite{r3} \} and $T_{g}\left(
r\right) =O\left\{ T_{G}\left( r\right) \right\} $ as $r\rightarrow \infty $
\{cf. \cite{y} \}$,$ we get for a sequence of values of $r$ tending to
infinity and for any $\delta \left( >0\right) $ that%
\begin{eqnarray*}
\frac{\log ^{\left[ l-1\right] }M_{g}\left( r\right) }{\log ^{\left[ l-2%
\right] }T_{G}(r)} &<&\frac{\log ^{\left[ l-1\right] }M_{g}\left( r\right) }{%
\log ^{\left[ l-2\right] }T_{g}(r)+O(1)}<\frac{\log ^{\left[ l-2\right]
}T_{g}(2r)+O(1)}{\log ^{\left[ l-2\right] }T_{g}(r)+O(1)} \\
&<&\frac{(1+\varepsilon )}{(1-\varepsilon )}\cdot \frac{\left( 2r\right)
^{\lambda _{g}^{\left[ l\right] }+\delta }}{\left( 2r\right) ^{\lambda _{g}^{%
\left[ l\right] }+\delta -\lambda _{g}^{\left[ l\right] }\left( 2r\right) }}%
\cdot \frac{1}{r^{\lambda _{g}^{\left[ l\right] }\left( r\right) }}+O(1) \\
&<&\frac{(1+\varepsilon )}{(1-\varepsilon )}\cdot 2^{^{\lambda _{g}^{\left[ l%
\right] }+\delta }}+O(1)
\end{eqnarray*}%
because $r^{\lambda _{g}^{\left[ l\right] }+\delta -\lambda _{g}^{\left[ l%
\right] }\left( r\right) }$ is ultimately an increasing function of $r$ by
Lemma \ref{l5}.

Since $\varepsilon \left( >0\right) $ and $\delta \left( >0\right) $ are
both arbitrary, we get from above that%
\begin{equation}
\underset{r\rightarrow \infty }{\underline{\lim }}\frac{\log ^{\left[ l-1%
\right] }M_{g}\left( r\right) }{\log ^{\left[ l-2\right] }T_{G}(r)}\leq
2^{^{\lambda _{g}^{\left[ l\right] }}}~.  \label{15}
\end{equation}

Again let $l=2.$ Since $\underset{r\rightarrow \infty }{\underline{\lim }}%
\frac{T_{g}(r)}{r^{\lambda _{g}\left( r\right) }}=1,$ in view of condition
(v) of Definition \ref{d2} it follows for a sequence of values of $r$
tending to infinity and for a given $\varepsilon \left( 0<\varepsilon
<1\right) $ that%
\begin{equation*}
T_{g}(r)<(1+\varepsilon )r^{\lambda _{g}\left( r\right) }
\end{equation*}%
and for all large positive numbers of $r,$%
\begin{equation*}
T_{g}(r)>(1-\varepsilon )r^{\lambda _{g}\left( r\right) }.
\end{equation*}%
As $\log M_{g}(r)\leq 3T_{g}(2r)$ \{cf. \cite{r3} \} and $T_{g}\left(
r\right) =O\left\{ T_{G}\left( r\right) \right\} $ as $r\rightarrow \infty $
\{cf. \cite{y} \}$,$ we get for any $\delta \left( >0\right) $, $\beta >1$
and for a sequence of values of $r$ tending to infinity that%
\begin{eqnarray}
\frac{\log M_{g}(r)}{T_{G}(r)} &<&\beta \cdot \frac{\log M_{g}(r)}{T_{g}(r)}%
<\beta \cdot \frac{3(1+\varepsilon )}{(1-\varepsilon )}\cdot \frac{\left(
2r\right) ^{\lambda _{g}+\delta }}{\left( 2r\right) ^{\lambda _{g}+\delta
-\lambda _{g}\left( 2r\right) }}\cdot \frac{1}{r^{\lambda _{g}\left(
r\right) }}+O(1)  \notag \\
i.e.,~\frac{\log M_{g}(r)}{T_{G}(r)} &<&\frac{3\beta (1+\varepsilon )}{%
(1-\varepsilon )}\cdot 2^{^{\lambda _{g}+\delta }}+O(1)~.  \label{16}
\end{eqnarray}%
because $r^{\lambda _{g}+\delta -\lambda _{g}\left( r\right) }$ is
ultimately an increasing function of $r$ by Lemma \ref{l5}. Since $%
\varepsilon \left( >0\right) $ and $\delta \left( >0\right) $ are both
arbitrary, we get from $\left( \ref{16}\right) $ that%
\begin{equation}
\underset{r\rightarrow \infty }{\underline{\lim }}\frac{\log M_{g}(r)}{%
T_{G}(r)}\leq 3\beta \cdot 2^{^{\lambda _{g}}}~.  \label{17}
\end{equation}%
Therefore from $\left( \ref{13}\right) $ of Case I and $\left( \ref{15}%
\right) $ it follows that%
\begin{equation*}
\underset{r\rightarrow \infty }{\underline{\lim }}\frac{\log ^{\left[ p-1%
\right] }T_{f\circ g}\left( r\right) }{\log ^{\left[ l-2\right] }T_{G}(r)}%
\leqslant \rho _{f}(p,q)\cdot 2^{^{\lambda _{g}^{\left[ l\right] }}}~.
\end{equation*}%
This proves the first part of the theorem. Also from $\left( \ref{14}\right) 
$ of Case II and $\left( \ref{15}\right) $ we obtain that%
\begin{equation*}
\underset{r\rightarrow \infty }{\underline{\lim }}\frac{\log ^{\left[ p+l-q-2%
\right] }T_{f\circ g}\left( r\right) }{\log ^{\left[ l-2\right] }T_{G}(r)}%
\leqslant 2^{^{\lambda _{g}^{\left[ l\right] }}}~.
\end{equation*}%
Thus the second part of the theorem is established. Again putting $l=2$ in $%
\left( \ref{13}\right) $ of Case I and in view of $\left( \ref{17}\right) $
we obtain that%
\begin{equation*}
\underset{r\rightarrow \infty }{\underline{\lim }}\frac{\log ^{\left[ p-1%
\right] }T_{f\circ g}\left( r\right) }{T_{G}(r)}\leqslant 3\beta \cdot \rho
_{f}(p,q)\cdot 2^{^{\lambda _{g}}}~.
\end{equation*}%
Thus the third part of the theorem follows.
\end{proof}

\begin{corollary}
\label{Cor1} Under the same conditions of Theorem \ref{t5}, if $l=2$ then%
\begin{equation*}
\underset{r\rightarrow \infty }{\underline{\lim }}\frac{\log ^{\left[ p%
\right] }T_{f\circ g}\left( r\right) }{\log ^{\left[ q\right] }T_{G}(r)}\leq
1~.
\end{equation*}
\end{corollary}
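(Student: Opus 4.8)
The plan is to specialise the estimates already established in the proof of Theorem \ref{t5} to the case $l=2$ and then take one extra logarithm in both numerator and denominator. Since $l=2$ forces $q\geq l-1=1$, the relevant branch is Case I together with part $(iii)$, and I may start from inequality $\left( \ref{11}\right) $, namely
\begin{equation*}
\log ^{\left[ p-1\right] }T_{f\circ g}\left( r\right) \leqslant \left( \rho
_{f}(p,q)+\varepsilon \right) \log ^{\left[ q\right] }M_{g}\left( r\right)
+O(1),
\end{equation*}
valid for all sufficiently large $r$. Because $g$ is transcendental entire, $\log ^{\left[ q\right] }M_{g}(r)\rightarrow \infty $, so applying one further logarithm absorbs both the factor $\rho _{f}(p,q)+\varepsilon $ and the $O(1)$ term, giving
\begin{equation*}
\log ^{\left[ p\right] }T_{f\circ g}\left( r\right) \leqslant \log ^{\left[
q+1\right] }M_{g}\left( r\right) +O(1)
\end{equation*}
for all large $r$. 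This is the working upper bound for the numerator.

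Next I would produce a matching bound for $\log ^{\left[ q+1\right] }M_{g}(r)$ in terms of $\log ^{\left[ q\right] }T_{G}(r)$. The naive route through $\log M_{g}(r)\leqslant 3T_{g}(2r)$ and $T_{g}=O(T_{G})$ introduces the dilation $r\mapsto 2r$; since $\log ^{\left[ q\right] }T_{G}$ is increasing, the ratio $\log ^{\left[ q\right] }T_{G}(2r)/\log ^{\left[ q\right] }T_{G}(r)$ need not tend to $1$, and this is the main obstacle. I would circumvent it exactly as in the $l=2$ part of Theorem \ref{t5}: inequality $\left( \ref{17}\right) $ (equivalently the sequence estimate $\left( \ref{16}\right) $ obtained from the lower proximate order via Lemma \ref{l5}) supplies a sequence of values of $r$ tending to infinity along which
\begin{equation*}
\log M_{g}(r)\leqslant C\cdot T_{G}(r),\qquad C<\infty \text{ (essentially }
3\beta \cdot 2^{\lambda _{g}}\text{)}.
\end{equation*}
Along this sequence the dilation is absent, and applying $\log ^{\left[ q\right] }$ (with $q\geq 1$) absorbs the constant $C$, so that
\begin{equation*}
\log ^{\left[ q+1\right] }M_{g}(r)=\log ^{\left[ q\right] }\left( \log
M_{g}(r)\right) \leqslant \log ^{\left[ q\right] }T_{G}(r)+O(1).
\end{equation*}

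Finally I would combine the two displays: along the chosen sequence,
\begin{equation*}
\log ^{\left[ p\right] }T_{f\circ g}\left( r\right) \leqslant \log ^{\left[ q
\right] }T_{G}(r)+O(1),
\end{equation*}
whence
\begin{equation*}
\frac{\log ^{\left[ p\right] }T_{f\circ g}\left( r\right) }{\log ^{\left[ q
\right] }T_{G}(r)}\leqslant 1+\frac{O(1)}{\log ^{\left[ q\right] }T_{G}(r)}.
\end{equation*}
Since $T_{G}(r)\rightarrow \infty $ forces $\log ^{\left[ q\right]
}T_{G}(r)\rightarrow \infty $, the right-hand side tends to $1$ along the sequence, so the $\underline{\lim }$ over all $r$ is at most $1$, as required. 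The only points needing care are the absorption of the multiplicative constants after the extra logarithms (which succeeds precisely because $q\geq 1$ and the inner quantities diverge) and the reliance on the sequence from $\left( \ref{16}\right) $–$\left( \ref{17}\right) $ rather than a pointwise estimate, which is exactly what removes the factor $2^{\lambda _{g}}$ and the $2r$ dilation and collapses the bound to $1$.
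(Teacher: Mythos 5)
Your proposal is correct and follows essentially the same route as the paper's own proof: take one extra logarithm of $\left( \ref{11}\right) $ to absorb the factor $\rho _{f}(p,q)+\varepsilon $, use the sequence estimate $\left( \ref{16}\right) $ to pass from $\log ^{\left[ q+1\right] }M_{g}(r)$ to $\log ^{\left[ q\right] }T_{G}(r)+O(1)$, and combine. Your explicit remark that the $r\mapsto 2r$ dilation and the constant $2^{\lambda _{g}}$ are harmless only because they sit inside an iterated logarithm along the chosen sequence is a point the paper leaves implicit, but the argument is the same.
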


\begin{proof}
If $q\geq 1$, then from $\left( \ref{11}\right) $ we obtain for all
sufficiently large values of $r$ that 
\begin{equation}
\log ^{\left[ p\right] }T_{f\circ g}\left( r\right) \leqslant \log ^{\left[
q+1\right] }M_{g}\left( r\right) +O(1)~.  \label{18}
\end{equation}%
Now from $\left( \ref{16}\right) $ we have for a sequence of values of $r$
tending to infinity that%
\begin{eqnarray}
\log M_{g}(r) &\leq &\left\{ \frac{3\beta (1+\varepsilon )}{(1-\varepsilon )}%
\cdot 2^{^{\lambda _{g}+\delta }}\right\} \cdot T_{G}(r)  \notag \\
i.e.,~\log ^{\left[ q+1\right] }M_{g}(r) &\leq &\log ^{\left[ q\right]
}T_{G}(r)+O(1)~.  \label{20}
\end{eqnarray}%
Now combining $\left( \ref{18}\right) $ and $\left( \ref{20}\right) $ it
follows for a sequence of values of $r$ tending to infinity that%
\begin{eqnarray*}
\log ^{\left[ p\right] }T_{f\circ g}\left( r\right) &\leqslant &\log ^{\left[
q\right] }T_{G}(r)+O(1) \\
i.e.,~\frac{\log ^{\left[ p\right] }T_{f\circ g}\left( r\right) }{\log ^{%
\left[ q\right] }T_{G}(r)} &\leq &1+\frac{O(1)}{\log ^{\left[ q\right]
}T_{G}(r)}~.
\end{eqnarray*}%
So from above we obtain that%
\begin{equation*}
\underset{r\rightarrow \infty }{\underline{\lim }}\frac{\log ^{\left[ p%
\right] }T_{f\circ g}\left( r\right) }{\log ^{\left[ q\right] }T_{G}(r)}\leq
1~.
\end{equation*}%
Thus the corollary follows.
\end{proof}

\begin{theorem}
\label{t6} Let $f$\ be meromorphic and $g$ be a transcendental entire such
that $\rho _{f}(p,q)$ and $\rho _{g}^{\left[ l\right] }$ are finite where $%
p,q,l$ are positive integers with $p>q$ and $l\geq 2$. Also let $G=g^{\alpha
}Q\left[ g\right] $ where $Q\left[ g\right] $ is a differential polynomial
in $g$, then for any $\alpha \geq 1$%
\begin{eqnarray*}
\left( i\right) ~\ \ \ \ \ ~\underset{r\rightarrow \infty }{\underline{\lim }%
}\frac{\log ^{\left[ p-1\right] }T_{f\circ g}\left( r\right) }{\log ^{\left[
l-2\right] }T_{G}(r)} &\leqslant &\rho _{f}(p,q)\cdot 2^{^{\rho _{g}^{\left[
l\right] }}}\text{ if }q\geq l-1>1, \\
\left( ii\right) ~\underset{r\rightarrow \infty }{\underline{\lim }}\frac{%
\log ^{\left[ p+l-q-2\right] }T_{f\circ g}\left( r\right) }{\log ^{\left[ l-2%
\right] }T_{G}(r)} &\leqslant &2^{^{\rho _{g}^{\left[ l\right] }}}\text{ if }%
q<l-1,
\end{eqnarray*}%
and%
\begin{equation*}
\left( iii\right) ~\underset{r\rightarrow \infty }{\underline{\lim }}\frac{%
\log ^{\left[ p-1\right] }T_{f\circ g}\left( r\right) }{T_{G}(r)}\leqslant
3\beta \cdot \rho _{f}(p,q)\cdot 2^{^{\rho _{g}}}\text{ if }q\geq l-1=1,
\end{equation*}%
where $\beta >1.$
\end{theorem}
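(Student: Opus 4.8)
The plan is to run the argument of Theorem~\ref{t5} almost verbatim, replacing the generalized lower proximate order $\lambda_g^{[l]}$ (and $\lambda_g$) throughout by the generalized proximate order $\rho_g^{[l]}$ (and $\rho_g$), condition (v) of the lower proximate order by condition (v) of Definition~\ref{d2}, and Lemma~\ref{l5} by Lemma~\ref{l6}. First I would invoke Lemma~\ref{l1} together with $T_g(r)\leq \log^{+}M_g(r)$ to obtain, for all sufficiently large $r$,
\begin{equation*}
\log^{[p-1]}T_{f\circ g}(r)\leq (\rho_f(p,q)+\varepsilon)\log^{[q]}M_g(r)+O(1),
\end{equation*}
which is precisely $(\ref{11})$. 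Splitting into Case~I ($q\geq l-1$) and Case~II ($q<l-1$) and taking one further logarithm in the second case, I would reproduce the two reductions $(\ref{13})$ and $(\ref{14})$ unchanged; these are purely formal and invoke no growth hypothesis, so they carry over verbatim and reduce both parts of the theorem to estimating $\underset{r\rightarrow\infty}{\underline{\lim}}\dfrac{\log^{[l-1]}M_g(r)}{\log^{[l-2]}T_G(r)}$.

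The heart of the proof is to show, for $l>2$, that
\begin{equation*}
\underset{r\rightarrow\infty}{\underline{\lim}}\frac{\log^{[l-1]}M_g(r)}{\log^{[l-2]}T_G(r)}\leq 2^{\rho_g^{[l]}},
\end{equation*}
the analogue of $(\ref{15})$. Here I use condition (v) of Definition~\ref{d2}, namely $\overline{\underset{r\rightarrow\infty}{\lim}}\dfrac{\log^{[l-2]}T_g(r)}{r^{\rho_g^{[l]}(r)}}=1$, which this time supplies $\log^{[l-2]}T_g(r)<(1+\varepsilon)r^{\rho_g^{[l]}(r)}$ for all large $r$ and $\log^{[l-2]}T_g(r)>(1-\varepsilon)r^{\rho_g^{[l]}(r)}$ only along a sequence. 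Using $\log M_g(r)\leq 3T_g(2r)$ and $T_g(r)=O\{T_G(r)\}$ (Lemma~\ref{l8}), I would bound the ratio along that sequence by $\dfrac{(1+\varepsilon)}{(1-\varepsilon)}\cdot\dfrac{(2r)^{\rho_g^{[l]}(2r)}}{r^{\rho_g^{[l]}(r)}}+O(1)$ and then convert $\dfrac{(2r)^{\rho_g^{[l]}(2r)}}{r^{\rho_g^{[l]}(r)}}\leq 2^{\rho_g^{[l]}+\delta}$ via the fact that $r^{\rho_g^{[l]}+\delta-\rho_g^{[l]}(r)}$ is ultimately increasing by Lemma~\ref{l6}; letting $\varepsilon,\delta\downarrow 0$ gives the claim. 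The companion $l=2$ estimate $\underset{r\rightarrow\infty}{\underline{\lim}}\dfrac{\log M_g(r)}{T_G(r)}\leq 3\beta\cdot 2^{\rho_g}$, the analogue of $(\ref{17})$, follows by the same device from condition (v) of Definition~\ref{d2} with $l=2$ and Lemma~\ref{l6}.

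Finally, feeding these two estimates back into $(\ref{13})$ and $(\ref{14})$ yields part (i) (Case~I with $l>2$), part (ii) (Case~II), and, on setting $l=2$ in $(\ref{13})$, part (iii). The only genuinely delicate point — the sole place where the argument differs from Theorem~\ref{t5} — is the bookkeeping in the displayed crux estimate: since $\rho_g^{[l]}$ is normalized by a $\limsup$ rather than a $\liminf$, the roles of ``for all large $r$'' and ``along a sequence'' are interchanged, so I must choose the sequence along which the \emph{denominator} $\log^{[l-2]}T_g(r)$ is large while applying the all-$r$ upper bound to the numerator at the shifted argument $2r$. Verifying that this selection is consistent (so that the resulting bound is attained along one common sequence and the $\underline{\lim}$ is thereby legitimately controlled) is the main thing to check; everything else is routine and identical to Theorem~\ref{t5}.
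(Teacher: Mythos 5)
Your proposal is correct and follows essentially the same route as the paper: reduce via Lemma \ref{l1} and the reductions $(\ref{13})$, $(\ref{14})$ to bounding $\underline{\lim}\,\log^{[l-1]}M_g(r)/\log^{[l-2]}T_G(r)$, then use $\log M_g(r)\leq 3T_g(2r)$, Lemma \ref{l8}, condition (v) of the proximate-order definition and Lemma \ref{l6} to get the factor $2^{\rho_g^{[l]}}$, with the "for all large $r$" and "along a sequence" roles interchanged exactly as you describe (and your bookkeeping of which bound applies at $2r$ versus $r$ is consistent, since the all-$r$ upper bound may be applied at the shifted argument along the sequence where the lower bound holds).
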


\begin{proof}
\textbf{Case I. }Let $l>2.$ As $\underset{r\rightarrow \infty }{\overline{%
\lim }}\frac{\log ^{\left[ l-2\right] }T_{g}(r)}{r^{\rho _{g}^{\left[ l%
\right] }\left( r\right) }}=1$, for given $\varepsilon \left( 0<\varepsilon
<1\right) $ we obtain for all sufficiently large values of $r$ that%
\begin{equation*}
\log ^{\left[ l-2\right] }T_{g}(r)<(1+\varepsilon )r^{\rho _{g}^{\left[ l%
\right] }\left( r\right) }
\end{equation*}%
and for a sequence of values of $r$ tending to infinity,%
\begin{equation*}
\log ^{\left[ l-2\right] }T_{g}(r)>(1-\varepsilon )r^{\rho _{g}^{\left[ l%
\right] }\left( r\right) }~.
\end{equation*}%
Since $\log M_{g}(r)\leq 3T_{g}(2r)$ \{cf. \cite{r3} \} and $T_{g}\left(
r\right) =O\left\{ T_{G}\left( r\right) \right\} $ as $r\rightarrow \infty $
\{cf. \cite{y} \}$,$ for a sequence of values of $r$ tending to infinity we
get for any $\delta \left( >0\right) $ that%
\begin{eqnarray*}
\frac{\log ^{\left[ l-1\right] }M_{g}(r)}{\log ^{\left[ l-2\right] }T_{G}(r)}
&<&\frac{\log ^{\left[ l-1\right] }M_{g}(r)}{\log ^{\left[ l-2\right]
}T_{g}(r)+O(1)}<\frac{\log ^{\left[ l-2\right] }T_{g}(2r)+O(1)}{\log ^{\left[
l-2\right] }T_{g}(r)+O(1)} \\
&<&\frac{(1+\varepsilon )}{(1-\varepsilon )}\cdot \frac{\left( 2r\right)
^{\rho _{g}^{\left[ l\right] }+\delta }}{\left( 2r\right) ^{\rho _{g}^{\left[
l\right] }+\delta -\rho _{g}^{\left[ l\right] }\left( 2r\right) }}\cdot 
\frac{1}{r^{\rho _{g}^{\left[ l\right] }\left( r\right) }}+O(1) \\
&<&\frac{(1+\varepsilon )}{(1-\varepsilon )}\cdot 2^{^{\rho _{g}^{\left[ l%
\right] }+\delta }}
\end{eqnarray*}%
because $r^{\rho _{g}^{\left[ l\right] }+\delta -\rho _{g}^{\left[ l\right]
}\left( r\right) }$ is ultimately an increasing function of $r$ by Lemma \ref%
{l6}.

Since $\varepsilon \left( >0\right) $ and $\delta \left( >0\right) $ are
both arbitrary, we get from above that%
\begin{equation}
\underset{r\rightarrow \infty }{\underline{\lim }}\frac{\log ^{\left[ l-1%
\right] }M_{g}(r)}{\log ^{\left[ l-2\right] }T_{G}(r)}\leq 2^{^{\rho _{g}^{%
\left[ l\right] }}}~.  \label{21}
\end{equation}%
\textbf{Case II. }Let $l=2.$ Since $\underset{r\rightarrow \infty }{%
\overline{\lim }}\frac{T_{g}(r)}{r^{\rho _{g}\left( r\right) }}=1,$ in view
of condition (v) of Definition \ref{d3} it follows for all sufficiently
large values of $r$ and for a given $\varepsilon \left( 0<\varepsilon
<1\right) $ that%
\begin{equation*}
T_{g}(r)<(1+\varepsilon )r^{\rho _{g}\left( r\right) }
\end{equation*}%
and for a sequence of values of $r$ tending to infinity%
\begin{equation*}
T_{g}(r)>(1-\varepsilon )r^{\rho _{g}\left( r\right) }.
\end{equation*}%
As $\log M_{g}(r)\leq 3T_{g}(2r)$ \{cf. \cite{r3} \} and $T_{g}\left(
r\right) =O\left\{ T_{G}\left( r\right) \right\} $ as $r\rightarrow \infty $
\{cf. \cite{y} \}$,$ we get for any $\delta \left( >0\right) $, $\beta >1$
and for a sequence of values of $r$ tending to infinity that%
\begin{eqnarray*}
\frac{\log M_{g}(r)}{T_{G}(r)} &<&\beta \cdot \frac{\log M_{g}(r)}{T_{g}(r)}%
<\beta \cdot \frac{3(1+\varepsilon )}{(1-\varepsilon )}\cdot \frac{\left(
2r\right) ^{\rho _{g}+\delta }}{\left( 2r\right) ^{\rho _{g}+\delta -\rho
_{g}\left( 2r\right) }}\cdot \frac{1}{r^{\rho _{g}\left( r\right) }}+O(1) \\
i.e.,~\frac{\log M_{g}(r)}{T_{G}(r)} &<&\frac{3\beta (1+\varepsilon )}{%
(1-\varepsilon )}\cdot 2^{^{\rho _{g}+\delta }}+O(1)~.
\end{eqnarray*}%
because $r^{\rho _{g}+\delta -\rho _{g}\left( r\right) }$ is ultimately an
increasing function of $r$ by Lemma \ref{l6}.

Since $\varepsilon \left( >0\right) $ and $\delta \left( >0\right) $ are
both arbitrary, we get from the above that%
\begin{equation}
\underset{r\rightarrow \infty }{\underline{\lim }}\frac{\log M_{g}(r)}{%
T_{G}(r)}\leq 3\cdot 2^{^{\rho _{g}}}~.  \label{22}
\end{equation}%
Therefore from $\left( \ref{13}\right) $ and $\left( \ref{21}\right) $ it
follows that%
\begin{equation*}
\underset{r\rightarrow \infty }{\underline{\lim }}\frac{\log ^{\left[ p-1%
\right] }T_{f\circ g}\left( r\right) }{\log ^{\left[ l-2\right] }T_{G}(r)}%
\leqslant \rho _{f}(p,q)\cdot 2^{^{\rho _{g}^{\left[ l\right] }}}~.
\end{equation*}%
This proves the first part of the theorem. Similarly from $\left( \ref{14}%
\right) $ and $\left( \ref{21}\right) $ we get%
\begin{equation*}
\underset{r\rightarrow \infty }{\underline{\lim }}\frac{\log ^{\left[ p+l-q-2%
\right] }T_{f\circ g}\left( r\right) }{\log ^{\left[ l-2\right] }T_{G}(r)}%
\leqslant 2^{^{\rho _{g}^{\left[ l\right] }}}~.
\end{equation*}%
Thus the second part of the theorem follows.

Again putting $l=2$ in $\left( \ref{13}\right) $ and in view of $\left( \ref%
{22}\right) $ we obtain that%
\begin{equation*}
\underset{r\rightarrow \infty }{\underline{\lim }}\frac{\log ^{\left[ p-1%
\right] }T_{f\circ g}\left( r\right) }{T_{G}(r)}\leqslant 3\beta \cdot \rho
_{f}(p,q)\cdot 2^{^{\rho _{g}}}.
\end{equation*}%
Thus the third part of the theorem is established.
\end{proof}

\end{document}